\def\BibTeX{{\rm B\kern-.05em{\sc i\kern-.025em b}\kern-.08em
    T\kern-.1667em\lower.7ex\hbox{E}\kern-.125emX}}
\newtheorem{theorem}{Theorem}
\newtheorem{remark}{Remark}
\renewcommand\mathcal[1]{\text{\usefont{OMS}{cmsy}{m}{n}#1}}
\newcommand\difffrac[3][1]{
	\ifnum #1=1
		\frac{\mathrm{d} #2}{\mathrm{d} #3}
	\else
	\frac{{\mathrm{d}}^{#1} #2}{\mathrm{d} #3^{#1}}
	\fi
}
\newcommand\R{{\mathbb{R}}}
\newcommand\st{{\mathrm{s.t.}}}
\newcommand\tf{{t_\text{f}}}
\newcommand\f{{\text{f}}}
\renewcommand\vector[1]{\boldsymbol{#1}}
\newcommand\vt{{\vector{t}}}
\newcommand\vx{{\vector{x}}}
\newcommand\vlambda{{\vector{\lambda}}}
\newcommand\veta{{\vector{\eta}}}
\newcommand\vphi{{\vector{\phi}}}
\newcommand\vzeta{{\vector{\zeta}}}
\newcommand\vzero{{\vector{0}}}
\renewcommand\underbar[1]{{\underaccent{\bar}{#1}}}
\begin{document}

\title{\vspace{6.3mm}Time-Optimal Switching Surfaces for Triple Integrator Under Full Box Constraints
\thanks{This paper has been accepted by American Control Conference, 2026.}
\thanks{The authors would like to thank Tianxiang Lu for his valuable suggestions on Gr\"obner basis and findings on the discontinuity of the proper position in our previous work \cite{wang2025time}. This work was supported in part by the National Natural Science Foundation of China under Grant 624B2077, and in part by the National Key Research and Development Program of China under Grant 2023YFB4302003.}
\thanks{All authors are with the State Key Laboratory of Tribology in Advanced Equipment, Department of Mechanical Engineering, Tsinghua University, Beijing, 100084, China, and also with the Beijing Key Laboratory of Transformative High-end Manufacturing Equipment and Technology, Department of Mechanical Engineering, Tsinghua University, Beijing 100084, China.}
\thanks{\textsuperscript{*}Corresponding author: Chuxiong Hu (email: cxhu@tsinghua.edu.cn).}
}

\author{
    Yunan Wang, Chuxiong Hu\textsuperscript{*}, and Zhao Jin
}

\maketitle

\begin{abstract}
    Time-optimal control for triple integrator under full box constraints is a fundamental problem in the field of optimal control, which has been widely applied in the industry. However, scenarios involving asymmetric constraints, non-stationary boundary conditions, and active position constraints pose significant challenges. This paper provides a complete characterization of time-optimal switching surfaces for the problem, leading to novel insights into the geometric structure of the optimal control. The active condition of position constraints is derived, which is absent from the literature. An efficient algorithm is proposed, capable of planning time-optimal trajectories under asymmetric full constraints and arbitrary boundary states, with a 100\% success rate. Computational time for each trajectory is within approximately 10\,$\mu\text{s}$, achieving a 5-order-of-magnitude reduction compared to optimization-based baselines.
\end{abstract}

\begin{IEEEkeywords}
    Optimal control, switching surface, state constraint, augmented switching law, triple integrator.
\end{IEEEkeywords}

\section{Introduction}\label{sec:introduction}

Time-optimal control for triple integrator has been widely applied in robotic motion \cite{berscheid2021jerk,kong2025efficient}, numerical control machining \cite{wang2026online,wang2025consistency}, and so on. As a fundamental problem in the field of optimal control, time-optimal control under full box constraints, i.e., jerk-limited, has been widely investigated. However, a complete and efficient solution remains absent, especially under asymmetric constraints, non-stationary boundary conditions, and active position constraints. Specifically, the analytical characterization of switching surfaces can significantly advance theoretical understanding and improve computational efficiency of the solution \cite{he2020time}.

Based on Pontryagin's maximum principle (PMP) \cite{pontryagin1987mathematical}, time-optimal control for triple integrator under full box constraints can be characterized by the bang-bang and singular (BBS) control laws, giving rise to the well-known S-curve \cite{du2015complete}. The problem without state constraints was solved analytically \cite{bartolini2002time} and could be represented by Gr\"obner basis \cite{patil2015computation}. Kr{\"o}ger \cite{kroger2011opening} solved the problem without specifying terminal acceleration. He et al. \cite{he2020time} derived complete switching surfaces for the problem with stationary terminal states, where position constraints are inactive. Berscheid and Kr{\"o}ger \cite{berscheid2021jerk} developed Ruckig, an efficient algorithm for the problem with box constraints and arbitrary boundary states. However, the jerk constraint is required to be symmetric, and position constraints are not supported in the community version. Our previous work \cite{wang2025time} addressed the problem based on augmented switching law (ASL). However, the geometric structure of optimal solutions, i.e., switching surfaces, remains uncharacterized, and the active condition of the position constraints is absent. Furthermore, the algorithm in \cite{wang2025time} is not complete, i.e., it may fail for certain boundary states.

This paper aims to completely characterize the switching surfaces and establish an efficient algorithm for the time-optimal control problem under full box constraints. For the first time, time-optimal switching surfaces for triple integrator with active position constraints are provided, revealing novel geometric insights into the structure of optimal control. Leveraging the derived switching surfaces, an efficient algorithm is proposed, capable of planning time-optimal trajectories under asymmetric full constraints and arbitrarily given boundary states within approximately 10\,$\mu\text{s}$ of computational time. Furthermore, the established algorithm significantly outperforms optimization-based baselines regarding computational efficiency, numerical stability, and time-optimality. The implementation of the proposed algorithm is available at \cite{MIMGithub2025}.

\section{Preliminaries}\label{sec:preliminaries}

\subsection{Hamiltonian Analysis}

Consider the following optimal control problem of order $n$, where $\vx=(x_k(t))_{k=1}^n\in\R^n$ is the state vector, $u=u(t)\in\R$ is the control input, and $\tf>0$ is the free terminal time. The system dynamics \eqref{eq:problem_dynamics} takes the chain-of-integrator form. In box constraints \eqref{eq:problem_constraints}, a stationary state and control are feasible, i.e., $\underbar{\vx}=(\underbar{x}_k)_{k=1}^{n}<\vzero<\bar{\vx}=(\bar{x}_k)_{k=1}^n$ and $\underbar{u}<0<\bar{u}$ hold. Specifically, if $n=2,\,3$, then the system \eqref{eq:problem_dynamics} is called the double and triple integrator, respectively.
\begin{subequations}\label{eq:problem}
    \begin{align}
        \min_{(u,\tf)}\quad&J=\tf=\int_{0}^{\tf}\mathrm{d}t,\\
        \st\quad&\dot{x}_1=u,\,\,\dot{x}_k=x_{k-1},\,\,k=2,\dots,n,\label{eq:problem_dynamics}\\
        &\underbar{u}\leq u \leq \bar{u},\,\,\underbar{x}_k\leq x_k \leq \bar{x}_k,\,\,k=1,\dots,n,\label{eq:problem_constraints}\\
        &\vx(0)=\vx_0,\,\,\vx(\tf)=\vx_f.
    \end{align}
\end{subequations}

Denote the costate vector and multiplier vector by $\vlambda=(\lambda_k(t))_{k=1}^n\in\R^n$ and $\veta=(\eta_k^\pm(t))_{k=1}^n\in\R^{2n}_{+}$, respectively. The Hamiltonian $\mathcal{H}$ and Lagrangian $\mathcal{L}$ are defined as
\begin{subequations}
    \begin{align}
        &\mathcal{H}(\vx,\vlambda,u,t)=\lambda_0+\lambda_1u+\sum_{k=1}^{n-1}\lambda_{k+1}x_k\equiv0,\\
        &\mathcal{L}=\mathcal{H}+\sum_{k=1}^n\eta_k^+(x_k-\bar{x}_k)+\sum_{k=1}^n\eta_k^-(\underbar{x}_k-x_k),
    \end{align}
\end{subequations}
where $\eta_k^+(x_k-\bar{x}_k)=0$ and $\eta_k^-(x_k-\underbar{x}_k)=0$ hold almost everywhere (a.e.) in $\left[0,\tf\right]$. Furthermore, $\lambda_0\geq0$ is constant. It holds a.e. that $\dot{\vlambda}=-\frac{\partial\mathcal{L}}{\partial\vx}$, i.e.,
\begin{equation}
    \dot{\lambda}_n=0,\,\dot{\lambda}_k=-\left(\lambda_{k+1}+\eta_k^+-\eta_k^-\right),\,k=1,\dots,n-1.
\end{equation}
The Lagrangian form of problem \eqref{eq:problem} implies that $\mathcal{H}\equiv0$ holds along the optimal trajectory. The junction condition \cite{maurer1977optimal} implies that if a state constraint $h(\vx)\leq0$ switches between active and inactive at time $t_i$,
\begin{equation}\label{eq:junction_condition}
    \exists\zeta(t_i)\leq0,\,\st\,\vlambda(t_i^+)-\vlambda(t_i^-)=\zeta(t_i)\frac{\partial h(\vx)}{\partial\vx}.
\end{equation}
In other words, if $x_k\leq\bar{x}_k$ or $x_k\geq\underbar{x}_k$ switch between active and inactive at $t_i$, then $\lambda_k(t_i^+)\leq\lambda_k(t_i^-)$ or $\lambda_k(t_i^+)\geq\lambda_k(t_i^-)$ hold, respectively. Denote $\vzeta=(\zeta(t_i))$, where each $t_i$ is a junction time. The nontriviality condition is given that
\begin{equation}\label{eq:nontriviality_condition}
    (\lambda_0,\vlambda(t),\veta(t),\vzeta)\not=\vzero,\,\forall t\in\left[0,\tf\right].
\end{equation}

In this paper, the above analysis can be applied as necessary conditions of optimality for problem \eqref{eq:problem}.

\subsection{Theoretical Results in \cite{wang2025time}}\label{sec:theoretical_results_in_tac}
This section summarizes the theoretical results presented in our previous work \cite{wang2025time}. While \cite{wang2025time} assumes symmetric constraints for simplicity, i.e., $\bar{u}=-\underbar{u}$ and $\bar{x}_k=-\underbar{x}_k$, the theoretical results can be directly extended to asymmetric cases in this paper. According to PMP \cite{pontryagin1987mathematical}, the optimal control takes the form of a BBS control law, i.e.,
\begin{equation}\label{eq:PMP}
    u(t)=\begin{dcases}
        \bar{u},&\text{if } \lambda_1(t)<0,\\
        0,&\text{if } \lambda_1(t)=0,\\
        \underbar{u},&\text{if } \lambda_1(t)>0.
    \end{dcases}
\end{equation}
All arcs are classified into the following $2n$ types:
\begin{itemize}
    \item $\bar{0}$: $u\equiv\bar{u}$.
    \item $\underbar{0}$: $u\equiv\underbar{u}$.
    \item $\bar{k}$ ($k<n$): $x_k\equiv\bar{x}_k$; hence, $u\equiv0$, and $\forall j<k$, $x_j\equiv0$.
    \item $\underbar{k}$ ($k<n$): $x_k\equiv\underbar{x}_k$; hence, $u\equiv0$, and $\forall j<k$, $x_j\equiv0$.
\end{itemize}
Specifically, $\bar{0}$- and $\underbar{0}$-arcs are called unconstrained arcs, while others are called constrained arcs.

The tangent marker, which was rarely investigated in problem \eqref{eq:problem} prior to \cite{wang2025time}, is a common behavior in 3rd-order problems with position constraints. For problem \eqref{eq:problem} of order $n\leq3$, tangent markers only exist when $n=3$, and fall into the following two types:
\begin{itemize}
    \item $(\bar{3},2)$: $x_3$ is tangent to $\bar{x}_3$ at $t_i$, i.e., $\dot{x}_3(t_i)=\bar{x}_3$, $x_2(t_i)=0$, and $x_1(t_i)\leq0$.
    \item $(\underbar{3},2)$: $x_3$ is tangent to $\underbar{x}_3$ at $t_i$, i.e., $\dot{x}_3(t_i)=\underbar{x}_3$, $x_2(t_i)=0$, and $x_1(t_i)\geq0$.
\end{itemize}
In \eqref{eq:junction_condition}, the junction time can be either the connection of two adjacent arcs or a tangent marker.

Furthermore, if problem \eqref{eq:problem} is feasible, then the optimal solution is unique in an a.e. sense.

\section{Time-Optimal Switching Surfaces}

This section investigates problem \eqref{eq:problem} of order $n=3$. Section \ref{sec:double_integrator} first considers the double integrator. Then, Sections \ref{subsec:switching_surface_3order_inf_position} and \ref{subsec:full_box_constraints} construct the switching surfaces of problem \eqref{eq:problem} of order $n=3$ without and with position constraints, respectively. Based on the results of switching surfaces, an efficient algorithm is proposed in Section \ref{subsec:algorithm}.

\subsection{Degenerate Case: Reduction to Double Integrator}\label{sec:double_integrator}
Following the approach in \cite{wang2025time}, the first step is to construct the 2nd-order optimal-trajectory manifold $\mathcal{F}_2$. Specifically, this corresponds to problem \eqref{eq:problem} of order $n = 2$.

\begin{figure}[!t]
    \centering
    \includegraphics[width=\linewidth]{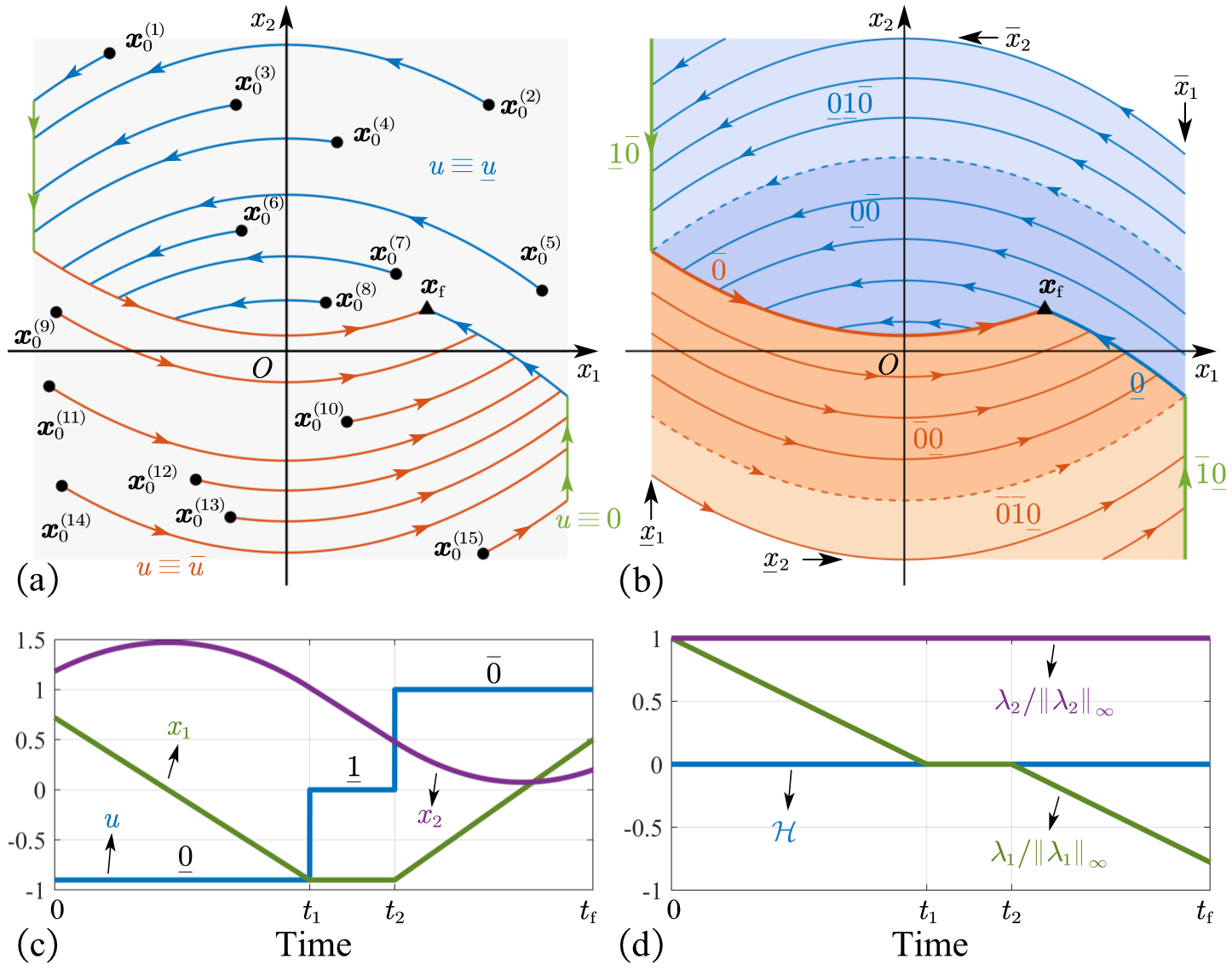}
    \caption{Illustration of the double integrator. (a) Time-optimal trajectories from various initial states $\vx_0$ to a fixed terminal state $\vx_\f$. (b) 2nd-order optimal-trajectory manifolds. (c-d) The state and costate profiles of the time-optimal trajectory between $\vx_0^{(2)}=(0.72,1.182)$ and $\vx_\f$. In this example, we assign $\vx_\f=(0.5,0.2)$, $-0.9\leq u\leq1$, and $(-0.9,-1)\leq\vx\leq(1,1.5)$.}
    \label{fig:2order_manifold}
\end{figure}

As shown in Fig. \ref{fig:2order_manifold}(a), we choose an arbitrary initial state $\vx_0$ and fix $\vx_\f\in\R^2$. If the problem is feasible, then the time-optimal trajectory from $\vx_0$ to $\vx_\f$ can be represented as the augmented switching law (ASL), i.e., the sequence of arcs and tangent markers. The set of initial states $\vx_0$ that share the same ASL forms a smooth manifold, as shown in Fig. \ref{fig:2order_manifold}(b).

Consider a numerical example in Fig. \ref{fig:2order_manifold}(c-d). The ASL is $\underbar{0}\underbar{1}\bar{0}$, i.e., $\vx_0$ moves along a $\underbar{0}$-, $\underbar{1}$-, and $\bar{0}$-arcs sequentially and finally reaches $\vx_\f$. Conditions of optimality in Section \ref{sec:preliminaries}, like PMP and $\mathcal{H}\equiv0$, are satisfied.

Specifically, the $\bar{0}$-manifold, from which the time-optimal trajectory to $\vx_\f$ consists a single $\bar{0}$-arc, is
\begin{equation}
    x_1=x_{\f1}-\bar{u}t,\,x_2=x_{\f2}-x_{\f1}t+\frac{1}{2}\bar{u}t^2,\,t\in\left(0,\frac{x_{\f1}-\underbar{x}_1}{\bar{u}}\right).
\end{equation}
The $\underbar{1}\bar{0}$-manifold, from which the time-optimal trajectory to $\vx_\f$ consists a $\underbar{1}$-arc and a $\bar{0}$-arc sequentially, is
\begin{equation}
    x_1=\underbar{x}_1,\,x_2\in\left(x_{\f2}+\frac{\underbar{x}_1^2-x_{\f1}^2}{2\bar{u}},\bar{x}_2\right).
\end{equation}
The $\underbar{0}$- and $\bar{1}\underbar{0}$-manifolds can be derived similarly.  Without loss of generality, this section considers the case where $(0,\bar{x}_2)\in\underbar{0}\underbar{1}\bar{0}$ and $(0,\underbar{x}_2)\in\bar{0}\bar{1}\underbar{0}$.

Based on the above switching curves, the optimal strategy can be provided in a closed-loop form. If the current state $\vx(t)$ lies on $\bar{0}$, $\underbar{0}$, or $\underbar{1}\bar{0}\cup\bar{1}\underbar{0}$, then let the current control be $u(t)=\bar{u}$, $\underbar{u}$, or $0$, respectively; otherwise, if $\vx(t)$ is above or below $\bar{0}\cup\underbar{0}$, then let $u(t)=\underbar{u}$ or $\bar{u}$, respectively. In other words, the closed-loop control strategy is given by
\begin{equation}\label{eq:closed_loop_control_2order}
    u\left(\vx\right)=\begin{dcases}
        \bar{u},&\text{if } \vx\in\bar{0}\cup\bar{0}\underbar{0}\cup\bar{0}\bar{1}\underbar{0},\\
        \underbar{u},&\text{if } \vx\in\underbar{0}\cup\underbar{0}\bar{0}\cup\underbar{0}\underbar{1}\bar{0},\\
        0,&\text{if } \vx\in\underbar{1}\bar{0}\cup\bar{1}\underbar{0}.
    \end{dcases}
\end{equation}
In \eqref{eq:closed_loop_control_2order}, each ASL refer to the relative interiors of the corresponding manifold.

\begin{remark}
    In the double integrator, the controls in $\underbar{0}\bar{0}$ and $\underbar{0}\underbar{1}\bar{0}$ are both $\underbar{u}$. However, in the triple integrator, these two sets correspond to distinct analytical expressions as switching surfaces. Furthermore, the $\underbar{1}\bar{0}$ and $\bar{1}\underbar{0}$ manifolds are allowed to be empty, where the optimal control derived in this section remains valid.
\end{remark}

\subsection{Triple Integrator without Position Constraints}\label{subsec:switching_surface_3order_inf_position}

This section considers problem \eqref{eq:problem} of order $n=3$ without position constraints, i.e., $\underbar{x}_3=-\infty$ and $\bar{x}_3=\infty$, the switching surfaces of which have not yet to be completely characterized in the literature. Specifically, Patil et al. \cite{patil2015computation} did not consider state constraints. Yury \cite{yury2016quasi} could not achieve time-optimality. He et al. \cite{he2020time} only considered the case where $\vx_\f=\vzero$.

Similar to Section \ref{sec:double_integrator}, $\vx_\f$ and constraints are fixed. As shown in Fig. \ref{fig:3order_manifold_inf_position}(a), time-optimal trajectories from various initial states $\vx_0$ to $\vx_\f$ can be summarized as a switching surface-based control strategy. As shown in Fig. \ref{fig:3order_manifold_inf_position}(b), time-optimal switching surfaces for triple integrator without position constraints are constructed based on the results in second integrator as follows.

Define $\vphi:\R^3\times\R\times\R\to\R^3,\left(\vx,u,t\right)\mapsto(x_1+ut,x_2+x_1t+\frac12ut^2,x_3+x_2t+\frac12x_1t^2+\frac16ut^3)$. Starting from $\vx_\f$, the $\bar{0}$- and $\underbar{1}\bar{0}$-curves are constructed through backward integration:
\begin{align}
    &\bar{0}=\left\{\vphi\left(\vx_\f,\bar{u},-t\right):\,t\in\left[0,\frac{x_{\f1}-\underbar{x}_1}{\bar{u}}\right]\right\},\\
    &\underbar{1}\bar{0}=\left\{\vphi\left(\vx_{\bar{0}},0,-t\right):\,t\in\left[0,\frac{x_{\bar{0},2}-\bar{x}_2}{\underbar{x}_1}\right]\right\}\subset\left\{x_1=\underbar{x}_1\right\},
\end{align}
where $\vx_{\bar{0}}=(x_{\bar{0},k})_{k=1}^3\triangleq\vphi\left(\vx_\f,\bar{u},\frac{\underbar{x}_1-x_{\f1}}{\bar{u}}\right)$ denotes an end of the $\bar{0}$-curve, as highlighted in Fig. \ref{fig:3order_manifold_inf_position}(b).

Without loss of generality, we still assume that $(0,\bar{x}_2)$ and $(0,\underbar{x}_2)$ lie in the projection of $\underbar{0}\underbar{1}\bar{0}$ and $\bar{0}\bar{1}\underbar{0}$ onto the $x_1x_2$-plane, respectively. In other words, it holds that
\begin{equation}\label{eq:010_plus_condition}
    x_{\underbar{1}\bar{0},2}\triangleq\bar{x}_2+\frac{\underbar{x}_1^2}{2\underbar{u}}\geq x_{\bar{0},2}.
\end{equation}
$\underbar{0}\bar{0}$- and $\underbar{0}\underbar{1}\bar{0}$-surfaces can be constructed through backward integration from points in $\bar{0}$- and $\underbar{1}\bar{0}$-curves, respectively:
\begin{align}
    &\underbar{0}\bar{0}=\left\{\vphi\left(\hat{\vx},\underbar{u},-t\right):\,\hat{\vx}\in\bar{0},\,t\in\left[0,\frac{\hat{x}_{1}-\bar{x}_1}{\underbar{u}}\right]\right\},\label{eq:01_plus}\\
    &\underbar{0}\underbar{1}\bar{0}=\left\{\vphi\left(\hat{\vx},0,-t\right):\,\hat{\vx}\in\underbar{1}\bar{0},\,t\in\left[0,t_{\underbar{0}\underbar{1}\bar{0}}(\hat{\vx})\right]\right\}.\label{eq:010_plus}
\end{align}
In \eqref{eq:010_plus}, $t_{\underbar{0}\underbar{1}\bar{0}}(\vx)=\frac{x_{1}-\bar{x}_1}{\underbar{u}}$ if $x_{2}\geq x_{\underbar{1}\bar{0},2}$; otherwise, $t_{\underbar{0}\underbar{1}\bar{0}}(\vx)=\frac{x_1+\sqrt{x_1^2-2\underbar{u}(x_2-\bar{x}_2)}}{\underbar{u}}$.

\begin{figure}[!t]
    \centering
    \includegraphics[width=\linewidth]{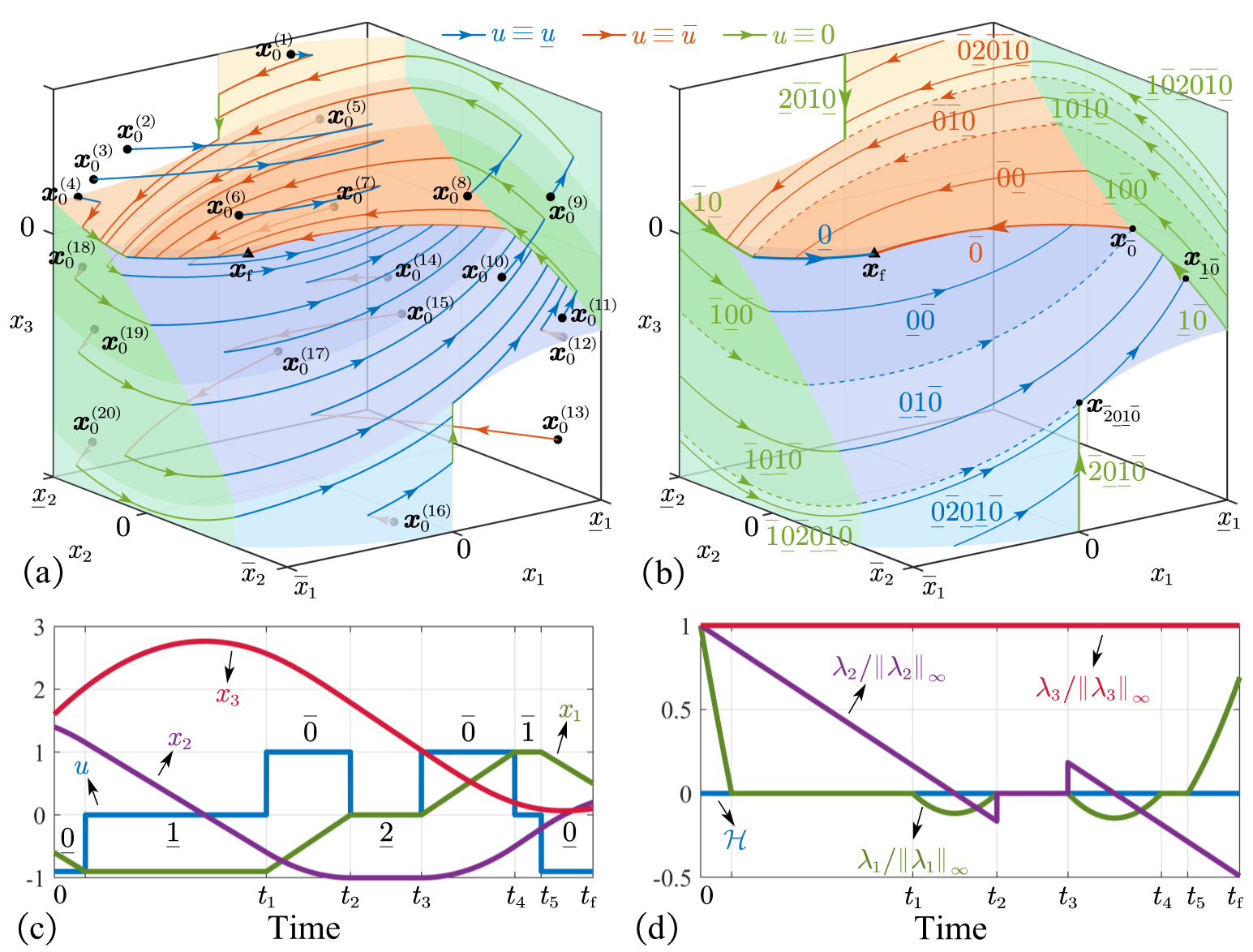}
    \caption{An example of the triple integrator without position constraints. (a) Time-optimal trajectories from different initial states $\vx_0$ to a fixed terminal state $\vx_\f$. (b) Switching manifolds. (c-d) The state and costate profiles of the time-optimal trajectory between $\vx_0^{(9)}=(-0.6,1.4,1.6)$ and $\vx_\f$, where the ASL is $\underline{0}\underline{1}\bar{0}\underline{2}\bar{0}\bar{1}\underline{0}$. In this example, we assign $\vx_\f=(0.5,0.2,0.1)$, $-0.9\leq u\leq1$, and $(-0.9,-1,-\infty)\leq\vx\leq(1,1.5,\infty)$.}
    \label{fig:3order_manifold_inf_position}
\end{figure}

Consider the intersection of the $\underbar{0}\bar{0}$-surface and the plane $\left\{x_1=\bar{x}_1\right\}$. The $\bar{1}\underbar{0}\bar{0}$-surface is constructed through backward integration from the above intersection curve:
\begin{equation}
    \bar{1}\underbar{0}\bar{0}=\left\{\vphi\left(\hat{\vx},0,-t\right):\,\hat{\vx}\in\underbar{0}\bar{0},\,\hat{x}_1=\bar{x}_1,\,t\in\left[0,\frac{\hat{x}_{2}-\underbar{x}_2}{\bar{x}_1}\right]\right\}.
\end{equation}
Similarly, the $\bar{1}\underbar{0}\underbar{1}\bar{0}$-surface is
\begin{equation}
    \bar{1}\underbar{0}\underbar{1}\bar{0}=\left\{\vphi\left(\hat{\vx},0,-t\right):\,\hat{\vx}\in\underbar{0}\underbar{1}\bar{0},\,\hat{x}_1=\bar{x}_1,\,t\in\left[0,\frac{\hat{x}_{2}-\underbar{x}_2}{\bar{x}_1}\right]\right\}.
\end{equation}

For a constrained arc where $x_2\equiv\bar{x}_2$, it holds that $x_1\equiv0$ and $u\equiv0$. In other words, the phase $x_2\equiv\bar{x}_2$ only exists in the line $\bar{2}\triangleq\left\{x_2=\bar{x}_2,\,x_1=0\right\}$. Assumption \eqref{eq:010_plus_condition} implies that the $\bar{2}$-line intersects the $\underbar{0}\underbar{1}\bar{0}$-surface at a point $\vx_{\bar{2}\underbar{0}\underbar{1}\bar{0}}\triangleq\vphi\left(\vx_{\underbar{1}\bar{0}},\underbar{u},-\frac{x_{\underbar{1}\bar{0}}}{\underbar{u}}\right)$ where $\vx_{\underbar{1}\bar{0}}\triangleq\vphi\left(\vx_{\bar{0}},0,-\frac{x_{\bar{0},2}-x_{\underbar{1}\bar{0},2}}{\underbar{x}_1}\right)$. Starting from $\vx_{\bar{2}\underbar{0}\underbar{1}\bar{0}}$, the $\bar{2}\underbar{0}\underbar{1}\bar{0}$-curve is
\begin{equation}
    \bar{2}\underbar{0}\underbar{1}\bar{0}=\left\{\vx:\,x_1=x_2=0,\,x_3\leq x_{ \bar{2}\underbar{0}\underbar{1}\bar{0},3}\right\}.
\end{equation}

The $\underbar{0}\bar{2}\underbar{0}\underbar{1}\bar{0}$- and $\bar{1}\underbar{0}\bar{2}\underbar{0}\underbar{1}\bar{0}$-surfaces are constructed by performing backward integration from the $\bar{2}\underbar{0}\underbar{1}\bar{0}$-curve as:
\begin{align}
    &\underbar{0}\bar{2}\underbar{0}\underbar{1}\bar{0}=\left\{\vphi\left(\hat\vx,\underbar{u},-t\right):\,\hat\vx\in\bar{2}\underbar{0}\underbar{1}\bar{0},\,t\in\left[0,-\frac{\bar{x}_1}{\underbar{u}}\right]\right\},\\
    &\bar{1}\underbar{0}\bar{2}\underbar{0}\underbar{1}\bar{0}=\left\{\vphi\left(\hat\vx,0,-t\right):\,\hat{\vx}\in\underbar{0}\bar{2}\underbar{0}\underbar{1}\bar{0},\,\hat{x}_1=\bar{x}_1,\,t\in\left[0,\frac{\hat{x}_2-\underbar{x}_2}{\bar{x}_1}\right]\right\}.
\end{align}

Based on the constructed switching surfaces, the state space $\R^3$ can be partitioned into regions, each corresponding to a unique ASL. The volumetric regions where $u\equiv\bar{u}$ are shown in Fig. \ref{fig:3order_volume_inf_position}. For example, $\bar{0}\underbar{0}\bar{0}$ is the volumetric region generated by extruding the surface $\underbar{0}\bar{0}$ along the generatrix $u\equiv\bar{0}$ backwardly, i.e.,
\begin{equation}
    \bar{0}\underbar{0}\bar{0}=\left\{\vphi\left(\hat{\vx},\bar{u},-t\right):\,\hat{\vx}\in\underbar{0}\bar{0},\,t\in\left[0,\frac{\hat{x}_{1}-\underbar{x}_1}{\bar{u}}\right]\right\}.
\end{equation}
In other words, $\forall \vx_0\in\bar{0}\underbar{0}\bar{0}$, the optimal trajectory from $\vx_0$ to $\vx_\f$ is represented by the ASL $\bar{0}\underbar{0}\bar{0}$, i.e., a sequence of a $\bar{0}$-arc, a $\underbar{0}$-arc, and a $\bar{0}$-arc. The region $\bar{0}\underbar{0}\underbar{1}\bar{0}$ is defined as
\begin{equation}
    \bar{0}\underbar{0}\underbar{1}\bar{0}=\left\{\vphi\left(\hat{\vx},\bar{u},-t\right):\,\hat{\vx}\in\underbar{0}\underbar{1}\bar{0},\,t\in\left[0,t_{\bar{0}\bar{1}\underbar{0}}(\hat{\vx})\right]\right\},
\end{equation}
where $t_{\bar{0}\bar{1}\underbar{0}}(\vx)=\frac{x_{1}-\underbar{x}_1}{\bar{u}}$ if $x_{2}\geq \underbar{x}_2+\frac{\bar{x}_1^2}{2\bar{u}}$; otherwise, $t_{\bar{0}\bar{1}\underbar{0}}(\vx)=\frac{x_1-\sqrt{x_1^2-2\bar{u}(x_2-\underbar{x}_2)}}{\bar{u}}$. Similar analysis can be applied to other 3D regions whose ASLs are ended with $\underbar{0}$. 

\begin{remark}
    If \eqref{eq:010_plus_condition} does not hold, then the $\bar{2}\underbar{0}\underbar{1}\bar{0}$-curve should be substituted by the $\bar{2}\underbar{0}\bar{0}$-curve, so as $\underbar{0}\bar{2}\underbar{0}\underbar{1}\bar{0}$- and $\bar{1}\underbar{0}\bar{2}\underbar{0}\underbar{1}\bar{0}$-surfaces. Similar analysis can be applied to the $\underbar{2}\bar{0}\bar{1}\underbar{0}$-curve. Furthermore, if some constraints are not assigned, then the corresponding manifolds should be omitted. For example, if $\bar{x}_1=\infty$, then those manifolds with ASLs containing $\bar{1}$ should be omitted.
\end{remark}

\begin{figure}[!t]
    \centering
    \includegraphics[width=\linewidth]{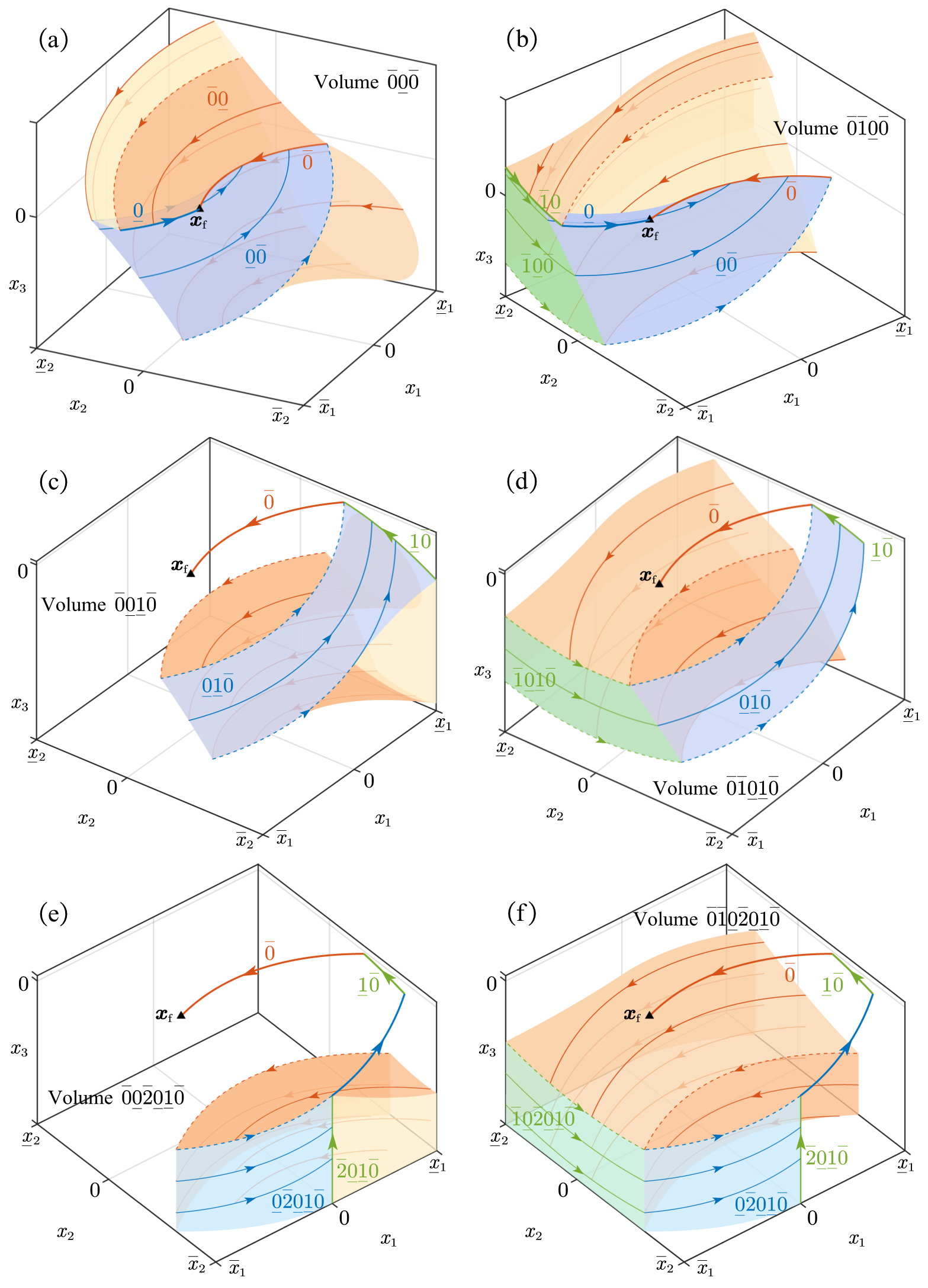}
    \caption{Partition of the state space $\R^3$ by ASLs. The terminal state and constraints are the same as those in Fig. \ref{fig:3order_manifold_inf_position}.}
    \label{fig:3order_volume_inf_position}
\end{figure}

Similar to \eqref{eq:closed_loop_control_2order}, the closed-loop control strategy for the triple integrator without position constraints can be derived as follows:
\begin{equation}\label{eq:control_law_3order_inf_position}
    u(\vx)=\begin{dcases}
        \bar{u},&\text{if } \vx\in\text{relint}(\mathfrak{L}),\,\mathfrak{L}\text{ begins with }\bar{0},\\
        \underbar{u},&\text{if } \vx\in\text{relint}(\mathfrak{L}),\,\mathfrak{L}\text{ begins with }\underbar{0},\\
        0,&\text{otherwise},
    \end{dcases}
\end{equation}
where $\text{relint}(\mathfrak{L})$ denotes the relative interior of the manifold whose ASL is $\mathfrak{L}$. It should be noted that the switching surfaces $\bar{0}\underbar{0}\cup\bar{0}\bar{1}\underbar{0}\cup\underbar{0}\bar{0}\cup\underbar{0}\underbar{1}\bar{0}$ is \textit{NOT} single-valued with respective to (w.r.t.) $\left(x_1,x_2\right)$, and therefore cannot be expressed in the explicit form $x_3=x_3(x_1,x_2)$ at extremal positions. 

\begin{theorem}\label{thm:3order_inf_position}
    The control law \eqref{eq:control_law_3order_inf_position} is complete for the triple integrator without position constraints. In other words, $\forall\,\vx_0,\vx_\f\in\R^3$, if the optimal control problem from $\vx_0$ to $\vx_\f$ is feasible, then the trajectory given by \eqref{eq:control_law_3order_inf_position} satisfies PMP.
\end{theorem}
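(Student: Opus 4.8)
The plan is to prove two things: (i) the surfaces constructed in this section tile all of $\R^3$, so that \eqref{eq:control_law_3order_inf_position} assigns a well-defined control at every $\vx_0$ and the resulting closed-loop trajectory reaches $\vx_\f$ in finite time; and (ii) along that trajectory one can exhibit costates and multipliers $(\lambda_0,\vlambda,\veta,\vzeta)$ satisfying every necessary condition of Section \ref{sec:preliminaries}. A simplifying observation to record first is that, since $\bar{x}_3=\infty$ and $\underbar{x}_3=-\infty$, neither tangent-marker type $(\bar{3},2)$ nor $(\underbar{3},2)$ can occur, so the only junctions are entries and exits of the $x_1$- and $x_2$-constrained arcs, which keeps the costate bookkeeping finite and explicit.

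First I would establish coverage and closed-loop consistency together. Each region is, by construction, a backward image $\vphi(\cdot,u,-t)$ of $\vx_\f$ under the reverse of its ASL; hence forward integration with the constant control prescribed by the first letter of that ASL inverts the construction, keeps the state inside the region until it meets the next, lower-dimensional surface, and there the feedback \eqref{eq:control_law_3order_inf_position} switches to the next letter. Iterating down the nested hierarchy (a $3$D volume, then an $\underbar{0}\bar{0}$- or $\underbar{0}\underbar{1}\bar{0}$-type surface, then a $\bar{0}$- or $\underbar{1}\bar{0}$-curve, then $\vx_\f$) shows that the closed loop exactly reproduces the intended open-loop ASL and terminates at $\vx_\f$. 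Exhaustiveness would then follow by verifying that the parameter intervals of successive backward integrations abut along shared boundaries, and that the $u\equiv\bar{u}$ and $u\equiv\underbar{u}$ volumes fill the complement of the separating surface $\bar{0}\underbar{0}\cup\bar{0}\bar{1}\underbar{0}\cup\underbar{0}\bar{0}\cup\underbar{0}\underbar{1}\bar{0}$; feasibility of every pair $(\vx_0,\vx_\f)$ is automatic, since the chain of integrators is controllable and the box on $(u,x_1,x_2)$ contains an interior stationary point.

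Second, for each ASL I would construct $\vlambda$ from the adjoint system $\dot\lambda_3=0$, $\dot\lambda_k=-(\lambda_{k+1}+\eta_k^+-\eta_k^-)$, inserting a nonpositive jump $\zeta\le0$ (per \eqref{eq:junction_condition}) at each entry/exit of an $x_1$- or $x_2$-constrained arc and a sign-definite multiplier $\eta_k^\pm\ge0$ on each constrained arc, and normalizing so that $\mathcal{H}\equiv0$ and \eqref{eq:nontriviality_condition} hold. Verification then reduces to checking $\sgn\lambda_1$ against \eqref{eq:PMP}: $\lambda_1<0$ on every $\bar{0}$-arc, $\lambda_1>0$ on every $\underbar{0}$-arc, and $\lambda_1\equiv0$ on the constrained ($u\equiv0$) arcs and at the bang-bang switches; the sign of $\lambda_1$ is propagated through $\lambda_2$ and the constant $\lambda_3$ by the adjoint recursion, mirroring the double-integrator switching structure in the $(x_1,x_2)$-plane. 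The main obstacle is the \emph{joint} sign-consistency: one must show that this backward-built costate keeps the correct sign of $\lambda_1$ throughout \emph{every} ASL of the partition simultaneously, with all $\eta_k^\pm\ge0$ and all $\zeta\le0$, including at the folds where $\bar{0}\underbar{0}\cup\cdots$ is multivalued in $(x_1,x_2)$ and across the nested constrained-arc junctions where $\lambda_1$ or $\lambda_2$ jumps—establishing that these inequality constraints are all satisfiable at once, rather than checking a single representative ASL, is the crux. Once PMP is verified for the generated trajectory, the uniqueness result of \cite{wang2025time} identifies it with the optimal solution, which completes the argument.
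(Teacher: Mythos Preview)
Your proposal is correct and follows essentially the same two-step approach as the paper: establish that the backward-constructed regions cover the feasible state space so the feedback law generates a valid ASL reaching $\vx_\f$, then verify PMP by exhibiting compatible costates and multipliers for that ASL. The paper's execution differs only in concreteness---it invokes the 7-phase bound of \cite{berscheid2021jerk} to enumerate the finite ASL list $\mathcal{A}_3^+\cup\mathcal{A}_3^-$ up front, and then writes out explicit piecewise-polynomial formulas for $(\lambda_1,\lambda_2,\lambda_3)$ in the representative case $\underbar{0}\underbar{1}\bar{0}\underbar{2}\bar{0}\bar{1}\underbar{0}$, checking the signs and junction jumps directly rather than describing the adjoint recursion abstractly as you do.
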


\begin{proof}
    According to \cite{berscheid2021jerk}, the time-optimal trajectory should be of at most 7 phases. Denote $\mathcal{A}_3^+=$\{$\bar{0}\underbar{0}\bar{0}$, $\bar{0}\bar{1}\underbar{0}\bar{0}$, $\bar{0}\underbar{0}\underbar{1}\bar{0}$, $\bar{0}\bar{1}\underbar{0}\underbar{1}\bar{0}$, $\bar{0}\underbar{0}\bar{2}\underbar{0}\underbar{1}\bar{0}$, $\bar{0}\bar{1}\underbar{0}\bar{2}\underbar{0}\underbar{1}\bar{0}$\} and $\mathcal{A}_3^-=$\{$\underbar{0}\bar{0}\underbar{0}$, $\underbar{0}\underbar{1}\bar{0}\underbar{0}$, $\underbar{0}\bar{0}\bar{1}\underbar{0}$, $\underbar{0}\underbar{1}\bar{0}\bar{1}\underbar{0}$, $\underbar{0}\bar{0}\underbar{2}\bar{0}\bar{1}\underbar{0}$, $\underbar{0}\underbar{1}\bar{0}\underbar{2}\bar{0}\bar{1}\underbar{0}$\}. Then, the ASL can be $\mathfrak{L}\in\mathcal{A}_3^+\cup\mathcal{A}_3^-$.

    Arbitrarily assign the initial state $\vx_0\in\R^3$, where the optimal control problem is feasible with an ASL $\mathfrak{L}\in\mathcal{A}$. According to the backward integration process, switching surfaces in this section can drive the state $\vx$ from $\vx_0$ to $\vx_\f$ with an ASL $\mathfrak{L}'\in\mathcal{A}$. Specifically, $\mathfrak{L}'=\mathfrak{L}$ holds unless the trajectory induced by $\mathfrak{L}$ is intercepted by other switching surfaces induced by $\mathfrak{L}'$.

    Denote the solution induced by $\mathfrak{L}'$ as $\left(\vx(t),u(t),t_\f\right)$. Next, it is proved that $\left(\vx(t),u(t),t_\f\right)$ satisfies PMP. Without loss of generality, we consider the case where $\mathfrak{L}'=\underbar{0}\underbar{1}\bar{0}\underbar{2}\bar{0}\bar{1}\underbar{0}$. Denote the terminal time of the 6 arcs by $\left\{t_i\right\}_{i=1}^7$ which increases monotonically. Let $\lambda_0=1$ and $\lambda_3(t)\equiv-\frac{1}{\underbar{x}_2}$. Then, for $t\in(t_3,t_4)$, $\mathcal{H}\equiv0$ holds since $x_2\equiv\underbar{x}_2$, $x_1\equiv0$, and $u\equiv0$. Let
    \begin{equation}
        -\underbar{x}_2\lambda_1(t)=\begin{dcases}
            \left(t-t_1\right)\left(t+t_1-t_2-t_3\right),&t\in\left(0,t_1\right),\\
            \left(t-t_2\right)\left(t-t_3\right),&t\in\left(t_2,t_3\right),\\
            \left(t-t_4\right)\left(t-t_5\right),&t\in\left(t_4,t_5\right),\\
            \left(t-t_6\right)\left(t+t_6-t_5-t_4\right),&t\in\left(t_6,t_\f\right),\\
            0,&\text{otherwise}.\\
        \end{dcases}
    \end{equation}
    \begin{equation}
        \underbar{x}_2\lambda_2(t)=\begin{dcases}
            t-(t_2+t_3)/2,&t\in\left(0,t_3\right),\\
            0,&t\in\left(t_3,t_4\right),\\
            t-(t_4+t_5)/2,&t\in\left(t_4,t_\f\right),
        \end{dcases}
    \end{equation}

    It can be verified that PMP \eqref{eq:PMP} holds. Specifically, $\lambda_1$ is continuous, while $\lambda_2$ jumps increasingly at junction times $t_3$ and $t_4$. Therefore, the junction condition \eqref{eq:junction_condition} is also satisfied. Note that $\lambda_0\not=0$; hence, the nontriviality condition \eqref{eq:nontriviality_condition} holds.

    For other $\mathfrak{L}'\in\mathcal{A}$, a similar construction of the costate $\vlambda(t)$ can be applied, where PMP and other conditions of optimality are satisfied.
\end{proof}

\begin{remark}
    In the proof of Theorem \ref{thm:3order_inf_position}, we can ``design'' the costate according to the ASL $\mathfrak{L}'\in\mathcal{A}$ and the switching time of each arc. In this way, switching surfaces constructed through backward integration induce a complete control strategy satisfying PMP. Once $\vx$ enters a low-order switching manifold, not only does the control $u$ switch, but also the state $\vx$ moves along the switching manifold. 

    However, the above analysis cannot be applied to problem \eqref{eq:problem} of order $n\geq4$ due to the inability of constrained arcs to directly connect unconstrained arcs \cite{wang2025chattering}. The state $\vx$ would not move along but cross some switching manifolds when $\vx$ enters them. In higher-order cases, chattering phenomena may occur where the control switches between $\bar{u}$ and $\underbar{u}$ infinitely over a finite duration.
\end{remark}

A numerical example is provided in Fig. \ref{fig:3order_manifold_inf_position}(c-d), where the ASL is $\underbar{0}\underbar{1}\bar{0}\underbar{2}\bar{0}\bar{1}\underbar{0}$.

\subsection{Triple Integrator with Full Box Constraints}\label{subsec:full_box_constraints}

This section considers problem \eqref{eq:problem} of order $n=3$ with full box constraints, i.e., $-\infty<\underbar{x}_3<\bar{x}_3<\infty$. To the best of the authors' knowledge, the switching surfaces of triple integrator with position constraints have never been characterized in the literature. Before our previous work \cite{wang2025time}, only Ruckig Pro, a closed-source commercial software \cite{Ruckig2025Online}, claimed the ability to deal with position constraints in triple integrator. \cite{wang2025time} pointed out that position constraints in triple integrator induce \textit{tangent markers} and solved the problem in a decoupled way. However, the switching surfaces were not characterized.

\begin{figure}[!t]
    \centering
    \includegraphics[width=\linewidth]{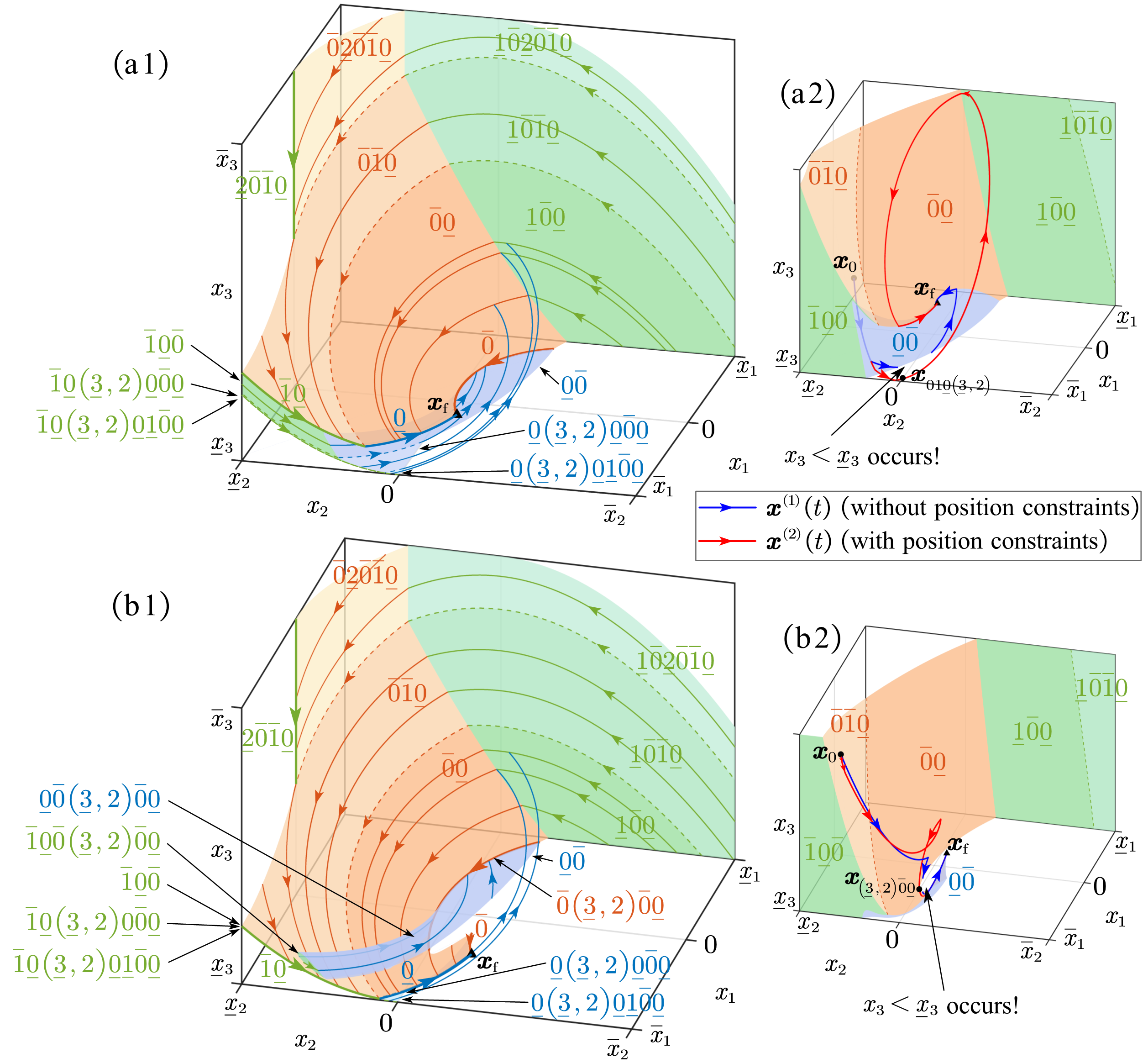}
    \caption{Switching manifolds of the triple integrator with full constraints, where $(-0.9,-1,0)\leq\vx\leq(1,1.5,2.4)$ and $-0.9\leq u\leq1$. (a) $\vx_\f=(0.5,0.2,0.2)$. (b) $\vx_\f=(0.5,0.3,0.075)$. In (a1) and (b1), switching manifolds consider the position constraints. In (a2) and (b2), switching manifolds do not consider position the position constraints. For each case, two trajectories, i.e., $\vx^{(1)}(t)$ and $\vx^{(2)}(t)$, are planned under the same boundary conditions but without and with considering position constraints, respectively. The initial states in (a2) and (b2) are $\vx_0=(0.5,-0.6292,0.2636)$ and $(0.98,-0.5944,0.2722)$, respectively.}
    \label{fig:3order_manifold_constrained_position}
\end{figure}

Consider the problems illustrated in Fig. \ref{fig:3order_manifold_constrained_position}, where position constraints $0\leq x_3\leq 2.4$ are introduced. As shown in Fig. \ref{fig:3order_manifold_constrained_position}(a2) and (b2), initial states $\vx_0$ lie ``above'' the switching surfaces in Section \ref{subsec:switching_surface_3order_inf_position} without considering position constraints; hence, the control should begin with $u=\bar{u}$. In this way, the planned trajectories $\vx^{(1)}(t)$ are infeasible since they cross the constraint boundary $\left\{x_3=\underbar{x}_3\right\}$. 

To minimize the degradation of time-optimality caused by position constraints, the control should begin with $u=\bar{u}$ until the profile planned by the strategy in Section \ref{subsec:switching_surface_3order_inf_position} is ``exactly'' feasible. In other words, the resulting profile should be tangent to the constraint boundary $\left\{x_3=\underbar{x}_3\right\}$, satisfying $x_3=\underbar{x}_3$, $x_2=0$, and $x_1\geq0$, which is denoted by the tangent marker $(\underline{3},2)$ in Section \ref{sec:theoretical_results_in_tac}. In Fig. \ref{fig:3order_manifold_constrained_position}(a2), $\vx_0$ first moves to $\vx_{\bar{0}\bar{1}\underbar{0}(\underbar{3},2)}\in\left\{x_3=\underbar{x}_3,\,x_2=0,\,x_1\geq0\right\}$ through $\bar{0}$-, $\bar{1}$-, and $\underbar{0}$-arcs. Then, the state moves from $\vx_{\bar{0}\bar{1}\underbar{0}(\underbar{3},2)}$ to $\vx_\f$ based on the control strategy in Section \ref{subsec:switching_surface_3order_inf_position}. In this way, the planned trajectory $\vx^{(2)}(t)$ is feasible and satisfies PMP, as shown in Fig. \ref{fig:3order_trajectory_constrained_position}(a). Similarly, in Fig. \ref{fig:3order_manifold_constrained_position}(b2), $\vx_\f$ moves backward to $\vx_{(\underbar{3},2)\bar{0}\underbar{0}}\in\left\{x_3=\underbar{x}_3,\,x_2=0,\,x_1\geq0\right\}$ through $\underbar{0}$- and $\bar{0}$-arcs. Next, the state moves from $\vx_0$ to $\vx_{(\underbar{3},2)\bar{0}\underbar{0}}$. The resulting trajectory $\vx^{(2)}(t)$ is shown in Fig. \ref{fig:3order_trajectory_constrained_position}(b).

Switching surfaces for triple integrator with full box constraints are shown in Fig. \ref{fig:3order_manifold_constrained_position}(a1) and (b1). The behavior of switching surfaces is much more intricate than that without position constraints. Similar to Section \ref{subsec:switching_surface_3order_inf_position}, the switching surfaces are constructed through backward integration from the terminal state $\vx_\f$, where the range of integration time is determined by all constraints. By comparing Fig. \ref{fig:3order_manifold_constrained_position}(a1) and Fig. \ref{fig:3order_manifold_inf_position}, the upper boundary of surface $\underbar{1}\bar{0}\underbar{2}\bar{0}\bar{1}\underbar{0}$-surface is redefined as a composite of a linear segment and a quadratic curve due to the position constraint $x_3\leq\bar{x}_3$. The $\underbar{0}\bar{0}$- and $\bar{1}\underbar{0}\bar{0}$-surfaces are also redefined as follows. $\underbar{0}(\underbar{3},2)\underbar{0}\bar{0}\underbar{0}$-, $\underbar{0}(\underbar{3},2)\underbar{0}\underbar{1}\bar{0}\underbar{0}$-, $\bar{1}\underbar{0}(\underbar{3},2)\underbar{0}\bar{0}\underbar{0}\text{-,}$ and $\bar{1}\underbar{0}(\underbar{3},2)\underbar{0}\underbar{1}\bar{0}\underbar{0}$-surfaces are introduced. For example, $\bar{1}\underbar{0}(\underbar{3},2)\underbar{0}\underbar{1}\bar{0}\underbar{0}$-surface is defined as \{$\vx_0$: $\exists\vt\in\R^6$, $t_i>0$, s.t. $\vx_i=\vphi(\vx_{i-1},u_i,t_i)$, $\vx_6=\vx_\f$, $\vector{u}=(0,\underbar{u},\underbar{u},0,\bar{u},\underbar{u})$, $x_{0,1}=\bar{x}_1$, $x_{2,1}>0$, $x_{2,2}=0$, $x_{2,3}=\underbar{x}_3$, $x_{3,1}=\underbar{x}_1$, and the induced trajectory is feasible\}. In other words, once the state enters the $\bar{1}\underbar{0}(\underbar{3},2)\underbar{0}\underbar{1}\bar{0}\underbar{0}$-surface, the state is tangent to \{$x_3=\underbar{x}_3$, $x_2=0$, $x_1\geq0$\} through $\bar{1}$- and $\underbar{0}$-arcs, and then the state reaches $\vx_\f$ through $\underbar{0}$-, $\underbar{1}$-, $\bar{0}$-, and $\underbar{0}$-arcs. In Fig. \ref{fig:3order_manifold_constrained_position}(b1), initial states in some regions should move to a fixed point $\vx_{(\underline{3},2)\bar{0}\underbar{0}}$ first, and then move to $\vx_\f$ through a fixed trajectory. These optimal trajectories induce switching manifolds $\bar{0}(\underline{3},2)\bar{0}\underbar{0}$, $\underbar{0}\bar{0}(\underline{3},2)\bar{0}\underbar{0}$, and $\bar{1}\underbar{0}\bar{0}(\underline{3},2)\bar{0}\underbar{0}$.

Denote $\mathcal{A}_2^+=$\{$\bar{0}\underbar{0}$, $\bar{0}\bar{1}\underbar{0}$\} and $\mathcal{A}_2^-=$\{$\underbar{0}\bar{0}$, $\underbar{0}\underbar{1}\bar{0}$\}. The following theorem summarizes allowed ASLs with position constraints.

\begin{figure}[!t]
    \centering
    \includegraphics[width=\linewidth]{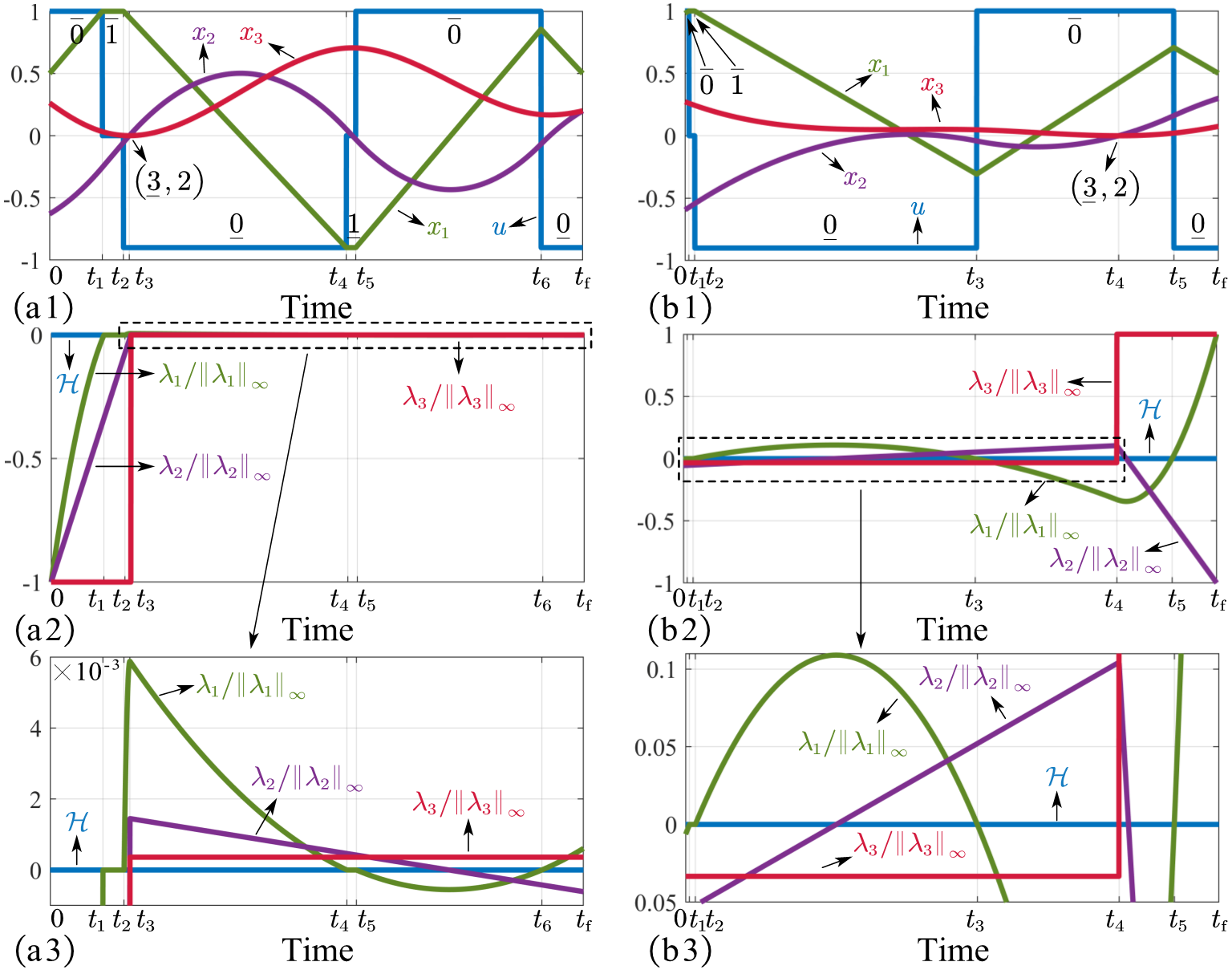}
    \caption{State and costate profiles of time-optimal trajectories with the tangent marker $(\underline{3},2)$. In this figure, (a) and (b) correspond to $\vx^{(2)}(t)$ Fig. \ref{fig:3order_manifold_constrained_position}(a) and (b) with ASLs $\bar{0}\bar{1}\underline{0}(\underline{3},2)\underline{0}\underline{1}\bar{0}\underline{0}$ and $\bar{0}\bar{1}\underline{0}\bar{0}(\underline{3},2)\bar{0}\underline{0}$, respectively.}
    \label{fig:3order_trajectory_constrained_position}
\end{figure}

\begin{theorem}\label{thm:3order_full_box_constraints}
    Time-optimal trajectories for triple integrator with full box constraints can be represented by ASLs of the following forms:
    \begin{subequations}\label{eq:ASL_3_2}
        \begin{align}
            &\hspace{-2mm}\dots(\bar{3},2)\beta_2^+(\underbar{3},2)\beta_1^-(\bar{3},2)\alpha^+(\underbar{3},2)\gamma_1^+(\bar{3},2)\gamma_2^-(\underbar{3},2)\dots,\label{eq:ASL_3_2_plus}\\
            &\hspace{-2mm}\dots(\underbar{3},2)\beta_2^-(\bar{3},2)\beta_1^+(\underbar{3},2)\alpha^-(\bar{3},2)\gamma_1^-(\underbar{3},2)\gamma_2^+(\bar{3},2)\dots,\label{eq:ASL_3_2_minus}
        \end{align}
    \end{subequations}
    where $\alpha^+\in\mathcal{A}_3^+$, $\alpha^-\in\mathcal{A}_3^-$, $\beta_i^+$, $\gamma_i^+\in\mathcal{A}_2^+$, $\beta_i^-$, and $\gamma_i^-\in\mathcal{A}_2^-$. The term ``$\dots$'' means that more than one tangent markers can exist before or after the given sequence.
\end{theorem}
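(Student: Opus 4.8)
The plan is to prove this structure theorem by decomposing any time-optimal trajectory at its tangent markers and invoking the unconstrained characterization of Theorem~\ref{thm:3order_inf_position} on each resulting sub-arc. First I would establish that the position constraint $\underbar{x}_3 \le x_3 \le \bar{x}_3$, being a third-order state constraint (the control $u$ appears only after differentiating $x_3$ three times), can never be active on an arc of positive measure in a time-optimal solution: a boundary arc $x_3 \equiv \bar{x}_3$ forces $x_2 \equiv x_1 \equiv u \equiv 0$, freezing the state at an isolated point and wasting time, contradicting optimality (equivalently, this is the tangent-marker phenomenon already identified in \cite{wang2025time}). Hence every contact of the trajectory with $\{x_3 = \bar{x}_3\}$ or $\{x_3 = \underbar{x}_3\}$ is an isolated tangential touch, i.e., a marker $(\bar{3},2)$ or $(\underbar{3},2)$, at which $x_2 = 0$ together with $x_1 \le 0$ or $x_1 \ge 0$, respectively.

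Next I would cut the trajectory at these markers. On the interior of each sub-arc the $x_3$-constraint is inactive, so the sub-arc is itself a time-optimal trajectory of the position-unconstrained problem and, by Theorem~\ref{thm:3order_inf_position}, carries an ASL in $\mathcal{A}_3^+ \cup \mathcal{A}_3^-$. The refinement that separates the central block $\alpha^\pm$ from the flanking blocks $\beta_i^\pm$, $\gamma_i^\pm$ comes from the boundary data at the markers: a sub-arc joining two consecutive markers begins and ends with $x_2 = 0$ while $x_3$ sweeps from one boundary to the other. I would argue that across such a sub-arc the velocity $x_2$ keeps a single sign — otherwise $x_3$ would re-approach the boundary it just left and create an intervening marker, contradicting consecutiveness — so that at most one of the two bounds $\underbar{x}_2$, $\bar{x}_2$ is ever approached. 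I would then show that in a flanking block this bound is not actually saturated, collapsing the admissible patterns to the two- and three-phase families $\mathcal{A}_2^\pm$, whereas the single sub-arc adjacent to the genuinely free boundary states $\vx_0$ and $\vx_\f$ (where $x_2$ need not vanish) is the one permitted to contain a $\bar{2}$- or $\underbar{2}$-arc, and is therefore the unique full block $\alpha^\pm \in \mathcal{A}_3^\pm$.

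I would then pin down the alternation of marker types and the signs of the surrounding blocks using the junction condition \eqref{eq:junction_condition}. For the upper constraint $h = x_3 - \bar{x}_3$ it forces $\lambda_3$ to jump down at a $(\bar{3},2)$ marker, and for the lower constraint it forces $\lambda_3$ to jump up at a $(\underbar{3},2)$ marker; since $\lambda_3$ is piecewise constant ($\dot{\lambda}_3 = 0$), consecutive markers of the same type are incompatible with the monotone drift of $\lambda_3$ together with the sign conditions $x_1 \le 0$ versus $x_1 \ge 0$ at the two marker types. This yields the strict alternation $(\bar{3},2),(\underbar{3},2),(\bar{3},2),\dots$ exhibited in \eqref{eq:ASL_3_2} and fixes each flanking block to begin with the control that drives $x_1$ away from the boundary just left, placing $\beta_i^+, \gamma_i^+$ in $\mathcal{A}_2^+$ and $\beta_i^-, \gamma_i^-$ in $\mathcal{A}_2^-$. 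Finally, I would confirm realizability of each form in \eqref{eq:ASL_3_2} by explicitly constructing a costate $\vlambda$ (piecewise quadratic $\lambda_1$, piecewise linear $\lambda_2$, piecewise constant $\lambda_3$ with the prescribed jumps) satisfying PMP \eqref{eq:PMP}, the junction condition, and nontriviality \eqref{eq:nontriviality_condition}, exactly as in the proof of Theorem~\ref{thm:3order_inf_position}.

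The main obstacle I anticipate is the reduction of the inter-marker blocks to the two-dimensional families $\mathcal{A}_2^\pm$ and, in tandem, the proof that the full block $\alpha^\pm$ is unique. The delicate point is excluding a velocity-saturating $\bar{2}$- or $\underbar{2}$-arc between two markers: one must show that such an arc, combined with the $x_2 = 0$ endpoint data, either forces an additional marker or admits a strictly faster competitor, each a contradiction. Establishing this cleanly requires a careful sign and monotonicity analysis of $x_2$ and of the switching function $\lambda_1$ across each block, coupled to the costate jumps dictated by \eqref{eq:junction_condition}, rather than a routine arc-counting argument.
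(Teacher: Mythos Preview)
Your decomposition at the tangent markers is the right starting move, but two of the load-bearing steps do not hold. First, the claim that between two consecutive markers ``$x_2$ keeps a single sign --- otherwise $x_3$ would re-approach the boundary it just left and create an intervening marker'' is false: $x_3$ can re-approach a position bound without touching it, so no intermediate marker is forced, and in fact the central block $\alpha^+$ (which can lie between a $(\bar 3,2)$ and a $(\underbar 3,2)$ marker with $x_2=0$ at both ends) \emph{does} allow $x_2$ to change sign and to saturate $\bar x_2$. Second, your identification of $\alpha^\pm$ as ``the sub-arc adjacent to the free boundary states $\vx_0,\vx_\f$'' is not what the theorem asserts: in \eqref{eq:ASL_3_2} the full block sits in the \emph{middle} of the marker chain, flanked by arbitrarily many $\beta_i^\pm$ on the left and $\gamma_i^\pm$ on the right, and the sign pattern of the flanking blocks actually \emph{flips} across $\alpha^\pm$ (compare $\beta_1^-$ and $\gamma_1^+$, both lying between $(\underbar 3,2)$ and $(\bar 3,2)$). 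A purely kinematic argument on $x_2$ cannot see this asymmetry. Relatedly, invoking Theorem~\ref{thm:3order_inf_position} via Bellman on each sub-arc only tells you the sub-arc is optimal for the \emph{constrained} problem between its endpoints; it does not hand you membership in $\mathcal{A}_3^\pm$, let alone the finer $\mathcal{A}_2^\pm$.

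What actually distinguishes the central block from the flanking ones --- and what the paper's proof uses --- is the sign of $\lambda_3$, propagated globally through the junction jumps \eqref{eq:junction_condition}. Along $\alpha^+$ one has $\lambda_3<0$, and at the adjoining $(\underbar 3,2)$ marker $\lambda_3$ jumps upward; once $\lambda_3>0$ on a flanking block, $\dot\lambda_2=-\lambda_3<0$ makes $\lambda_2$ strictly monotone, so $\lambda_1$ has a single extremum and at most one zero, forcing the two-switch structure $\mathcal A_2^+$. The exclusion of a $\bar 2$-arc there is not kinematic but comes from $\lambda_2\neq 0$ at the marker, which prevents the velocity constraint from becoming active. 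You correctly flagged this reduction as the crux, but the fix is a sign-and-monotonicity analysis of $\lambda_3$ and $\lambda_2$ across the junctions --- not of $x_2$. Rewriting your plan around the global costate (rather than cutting into independent subproblems) will also give you the marker alternation and the sign flip of the flanking blocks for free.
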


\begin{proof}
    Without loss of generality, we first prove that an ASL $\mathfrak{L}$ containing $\alpha^+\in\mathcal{A}_3^+$ must take the form of \eqref{eq:ASL_3_2_plus}. Denote the intervals of $\alpha^+$ and $\gamma_i^\pm$ are $(t_0,t_1)$ and $(t_{i},t_{i+1})$, respectively. Similar to the proof of Theorem \ref{thm:3order_inf_position}, we can derive that $\lambda_3(t)<0$ remains constant during arcs $\alpha^+$, i.e., $t\in(t_{0},t_{1})$. Furthermore, $\lambda_1(t_1)<0$, $\lambda_2(t_1)>0$. If no $(\underbar{3},2)$ occurs at $t_1$, then $\forall t>t_1$, $\lambda_3(t)\leq \lambda_3(t_1^-)$ holds; hence, $\lambda_1(t)<0$ and $u(t)\equiv\bar{u}$. So the trajectory ends with $\alpha^+$.

    If $(\underbar{3},2)$ occurs at $t_1$, then $\lambda_3(t_1^+)\geq\lambda_3(t_1^-)$ holds due to \eqref{eq:junction_condition}. If $\lambda_3(t_1^+)>0$ holds, then $\lambda_2(t)$ decreases monotonically in $(t_1,t_2)$; hence, $\lambda_2(t)$ crosses 0 at most one time in $(t_1,t_2)$. Similarly, $\lambda_1(t)$ crosses 0 at most one time before the next junction time, resulting in arcs $\gamma_1^+\in\mathcal{A}_2^+$. Note that velocity constraints $\underbar{x}_2\leq x_2\leq \bar{x}_2$ are not allowed to be active in $(t_1,t_2)$ since $\lambda_2(t_1)>0$. Therefore, if a junction occurs at $t_2$, then $(\bar{3},2)$ or $(\underbar{3},2)$ occur at $t_2$. By induction, it can be derived that the ASL $\mathfrak{L}$ is in the form of \eqref{eq:ASL_3_2_plus}. For the same reason, if $\mathfrak{L}$ contains $\alpha^-\in\mathcal{A}_3^-$, then $\mathfrak{L}$ is in the form of \eqref{eq:ASL_3_2_minus}.

    Assume that $\mathfrak{L}$ does not contain $\alpha\in\mathcal{A}_3^+\cup\mathcal{A}_3^-$ and is not in the form of \eqref{eq:ASL_3_2}. \textit{Step 1.} One can prove that $\mathfrak{L}$ does not contain $\bar{2}$- or $\underbar{2}$-arcs through costate analysis. Then, $\mathfrak{L}$ should contain tangent markers; otherwise, $\mathfrak{L}\in\mathcal{A}_3^+\cup\mathcal{A}_3^-$, which contradicts the assumption. \textit{Step 2.} Denote $\mathfrak{L}=\mathfrak{r}_0\mathfrak{t}_1\mathfrak{r}_1\mathfrak{t}_2\dots\mathfrak{t}_N\mathfrak{r}_N$ where $\mathfrak{t}_i$ is a tangent marker. The time interval of $\mathfrak{r}_i$ is $(t_{i},t_{i+1})$. It can be proved that $\mathfrak{t}_i\not=\mathfrak{t}_{i+1}$ and $\mathfrak{r}_i\in\mathcal{A}_2^+\cup\mathcal{A}_2^-$. \textit{Step 3.} Assume that $\mathfrak{r}_i,\mathfrak{r}_{i+1}\in\mathcal{A}_2^+$. Then, $\lambda_1$ crosses 0 decreasingly at $t_{i+1}$ and $\mathfrak{t}_i=(\underbar{3},2)$. Through costate analysis, one can prove that $\mathfrak{L}$ is in the form of \eqref{eq:ASL_3_2_plus}, where we let $\alpha^+=\mathfrak{r}_i\bar{0}$ with an additional $\bar{0}$-arc over a zero time duration. Therefore, if $\mathfrak{r}_i,\in\mathcal{A}_2^+$, then $\mathfrak{r}_{i+1}\in\mathcal{A}_2^-$. Similarly, if $\mathfrak{r}_i,\in\mathcal{A}_2^-$, then $\mathfrak{r}_{i+1}\in\mathcal{A}_2^+$. \textit{Step 4.} Without loss of generality, assume that $\mathfrak{r}_0\in\mathcal{A}_2^+$. If $\mathfrak{t}_1=(\bar{3},2)$, then $\mathfrak{L}$ is in the form of \eqref{eq:ASL_3_2_minus}, where we let $\alpha^-=\underbar{0}\mathfrak{r}_0$ with an additional $\underbar{0}$-arc over a zero time duration. Similarly, if $\mathfrak{t}_1=(\underbar{3},2)$, then $\mathfrak{L}$ is in the form of \eqref{eq:ASL_3_2_minus}. The above analysis leads to a contradiction.

\end{proof}

\begin{remark}
    While our previous work \cite{wang2025time} provides a complete classification of arcs and tangent markers, it failed to enumerate all possible ASLs, i.e., the allowed connection of these arcs and tangent markers. Theorem \ref{thm:3order_full_box_constraints} fills this gap by presenting, for the first time, all possible ASLs form for triple integrator with full box constraints.
\end{remark}

It is significant to determine the condition under which a tangent marker exists. Intuitively, if the position constraints are sufficiently relaxed, i.e., $\vx_0$ and $\vx_\f$ lie far away from the position constraint boundary, then the position constraints are inactive throughout the time-optimal trajectory. In this case, no tangent marker arises.

According to the analysis on Fig. \ref{fig:3order_manifold_constrained_position}, switching surfaces like $\underbar{0}(\bar{3},2)\underbar{0}\bar{0}\underbar{0}$ can be regarded as a deformation of surfaces like $\underbar{0}\bar{0}$ near the position constraint boundary. In other words, the tangent marker $(\underbar{3},2)$ exists when original switching surfaces like $\underbar{0}\bar{0}$ and $\bar{0}\underbar{0}$ intersects \{$x_3=\underbar{x}_3$, $x_2=0$, $x_1\geq0$\}, inducing switching surfaces like $\underbar{0}(\underbar{3},2)\underbar{0}\bar{0}\underbar{0}$ and $\underbar{0}\bar{0}(\underbar{3},2)\bar{0}\underbar{0}$, respectively. Similar analysis can be applied to $(\bar{3},2)$.

\begin{algorithm}[!t]
    \caption{Determining the Relative Direction}
    \label{alg:direction}
    \begin{algorithmic}[1]
        \REQUIRE $\vx_0$, $\vx_\f$, $\bar{\vx}$, $\underbar{\vx}$, $\bar{u}$, $\underbar{u}$.
        \ENSURE The relative direction from $\vx_0$ to $\vx_\f$.
        \STATE Find all BBS controls from $\vx_{0,1:2}$ to $\vx_{\f,1:2}$, denoted by $\hat{u}_i(t)$, $t\in[0,t_{\f,i}]$, $i=1,2,\dots,I$.\label{line:find_bbs_control_2nd}
        \STATE Generate $\vx_i(t)$ by integrating $\hat{u}_i(t)$ from $\vx_0$.
        \STATE Identify the number of $i$, s.t. $\vx_{i,3}(t_{\f,i})\geq\vx_{\f,3}$.
        \RETURN $\vx_0$ is higher than $\vx_\f$ \textbf{if} $i$ is odd; otherwise, $\vx_0$ is lower than $\vx_\f$.
    \end{algorithmic}
\end{algorithm}

\subsection{Algorithm}\label{subsec:algorithm}

This section presents an efficient algorithm for solving the 3rd-order problem \eqref{eq:problem} with full box constraints. 

For the first step, the relative direction from $\vx_0$ to $\vx_\f$ is determined in Algorithm \ref{alg:direction}, where position constraints are temporarily ignored. In other words, switching surfaces in Fig. \ref{fig:3order_manifold_inf_position}(b) are considered. $\vx_0$ is \textit{higher} than $\vx_\f$ if $\vx_0$ lies on the same side as the direction of $(0,0,+\infty)$ relative to the surface $\bar{0}\underbar{0}\cup\bar{0}\bar{1}\underbar{0}\cup\underbar{0}\bar{0}\cup\underbar{0}\underbar{1}\bar{0}$; otherwise, $\vx_0$ is \textit{lower} than $\vx_\f$.

In Algorithm \ref{alg:direction}, Line \ref{line:find_bbs_control_2nd} aims to deal with the case where the surface $\bar{0}\underbar{0}\cup\bar{0}\bar{1}\underbar{0}\cup\underbar{0}\bar{0}\cup\underbar{0}\underbar{1}\bar{0}$ is not single-valued w.r.t. $\left(x_1,x_2\right)$. Without loss of generality, assume that $\vx_{0,1:2}$ is lower than the curve $\bar{0}\cup\underbar{0}$ in the 2nd-order problem, corresponding to the optimal control with ASL $\bar{0}\underbar{0}$ or $\bar{0}\bar{1}\underbar{0}$. Then, $I=3$ holds if $x_{0,1}\geq0$, $x_{\f,1}>0$, and $x_{0,2}-\frac{x_{0,1}^2}{2\underbar{u}}\geq x_{\f,2}+\frac{x_{\f,1}^2}{2\bar{u}}$. The additional two BBS controls are with ASLs $\underbar{0}\bar{0}$ or $\underbar{0}\underbar{1}\bar{0}$, which are not optimal but feasible. The multiple BBS solutions corresponds to the multiple-valued surface $\bar{0}\underbar{0}\cup\bar{0}\bar{1}\underbar{0}\cup\underbar{0}\bar{0}\cup\underbar{0}\underbar{1}\bar{0}$. Similar analysis can be applied to the case where $\vx_{0,1:2}$ is higher than the curve $\bar{0}\cup\underbar{0}$. For all cases, $I\in\{0,1,3\}$, where $I=0$ means that the problem is infeasible.

Then, Algorithms \ref{alg:noposition} and \ref{alg:withposition} solve the 3rd-order optimal control problem \eqref{eq:problem} without and with position constraints, respectively. The connection of two points or a point and a curve can be solved by a system of polynomial equations w.r.t. the durations $\vt=(t_i)_{i=1}^M$ of arcs. Based on the Gr\"obner basis, the system can be solved by a polynomial equation w.r.t. $t_1$ of order $\leq 6$. These polynomial equations can be generated once and for all using Macaulay2 \cite{Macaulay2}, with boundary conditions and constraints incorporated as parametric coefficients.

\begin{remark}
    Algorithms in this section extend the completeness of the methods presented in our previous work \cite{wang2025time}. In \cite[Definition 10]{wang2025time}, a concept of proper position is introduced to determine the relative direction from $\vx_0$ to $\vx_\f$. However, such approach would fail when the 2-dimensional projection of switching surfaces is not single-valued w.r.t. $\left(x_1,x_2\right)$, e.g., Fig. \ref{fig:3order_manifold_inf_position}(b). Algorithm \ref{alg:direction} addresses this issue by finding all BBS controls in the 2nd-order problem, and thereby guarantees the correctness of the relative direction. Furthermore, Algorithm \ref{alg:withposition} resolves another incompleteness issue in \cite{wang2025time}, where $\gamma_i^\pm$, $i\geq0$, and $\beta_i^\pm$, $i\geq1$, in \eqref{thm:3order_full_box_constraints} is not considered.
\end{remark}

\begin{algorithm}[!t]
    \caption{Solving 3rd-order Optimal Control Problem \eqref{eq:problem} without Position Constraints}
    \label{alg:noposition}
    \begin{algorithmic}[1]
        \REQUIRE $\vx_0$, $\vx_\f$, $\bar{\vx}$, $\underbar{\vx}$, $\bar{u}$, $\underbar{u}$. ($\bar{x}_3=+\infty$ and $\underbar{x}_3=-\infty$.)
        \ENSURE Optimal solution $\vx^*(t)$, $u^*(t)$, and $t_\f^*$.
        \IF {$\vx_0$ is higher than $\vx_\f$}
            \STATE Apply Algorithm \ref{alg:noposition} from $-\vx_0$ to $-\vx_\f$, with constraints $-\bar{\vx}\leq\vx\leq-\underbar{\vx}$ and $-\bar{u}\leq u\leq-\underbar{u}$, resulting in $\hat\vx^*(t)$, $\hat u^*(t)$, and $\hat{t}_\f^*$.
            \RETURN $\vx^*(t)\leftarrow-\hat\vx^*(t)$, $u^*(t)\leftarrow-\hat u^*(t)$, $t_\f^*\leftarrow\hat{t}_\f^*$.
        \ENDIF
        \STATE Drive $\vx_0$ and $\vx_\f$ to the $\bar{2}$-curve with 2nd-order optimal controls forwardly and backwardly, respectively, resulting in $\vx_1(t)$, $t\in[0,t_{\f1}]$ and $\vx_2(t)$, $t\in[-t_{\f2},0]$.
        \IF {$x_{1,3}(t_{\f1})\leq x_{2,3}(-t_{\f2})$}
            \RETURN The connection of $\vx_1(t)$, $\bar{2}$, and $\vx_2(t)$.
        \ENDIF
        \STATE Determine the arc $\mathfrak{r}_1$ of $\vx_1(t)$ whose two ends $\vx^{(1)}$ and $\vx^{(2)}$ are lower and higher than $\vx_\f$, respectively.
        \STATE Connect $\vx^{(1)}$ and $\vx_\f$ with ASL $\mathfrak{L}=\mathfrak{r}_1\underbar{0}\bar{0}$ or $\mathfrak{r}_1\underbar{0}\underbar{1}\bar{0}$.
        \RETURN The connection of $\vx_1(t)$ and $\mathfrak{L}$.
    \end{algorithmic}
\end{algorithm}

\section{Numerical Experiments}

\subsection{Setup}

\textbf{Baselines.} (a) \textit{Ruckig} \cite{berscheid2021jerk} (community version): a state-of-the-art for 3rd-order problem \eqref{eq:problem}, where the offline planning interface is applied. (b) \textit{CasADi} \cite{andersson2019casadi}: a package for numerical optimization based on IPOPT \cite{wachter2006implementation}. (c) \textit{SCP} \cite{leomanni2022time}: a method based on the sequential convex programming, where optimization problems are solved by Gurobi \cite{gurobi}. 1000 intervals are applied in CasADi and SCP. All methods are implemented in C++ 20. Each method terminates if the computational time exceeds 150\,s or the solution is convergent.

\textbf{Metrics.} (a) Computational time $T_\text{c}$: all experiments were preformed on a computer with an Intel${}^\text{\textregistered}$ Core${}^\text{TM}$ Ultra 9 Processor 275HX. (b) Terminal time $t_\f$. (c) Success rate $R_\text{s}$ to obtain a feasible solution. An experiment is successful if the exceedance of each constraint is no more than $10^{-3}$ and the open-loop error $E_\text{o}=\sqrt{\frac13\sum_{k=1}^{3}\left(\frac{x_i(t_\f)-x_{\f,i}}{\sup x_i(t)-\inf x_i(t)}\right)^2}\leq 10^{-4}$, where $\vx(t_\f)$ is the solved terminal state integrated from $\vx_0$ with $u(t)$, and $\vx_\f$ is the desired terminal state. 

\subsection{Numerical Results}

\begin{algorithm}[!t]
    \caption{Solving 3rd-order Optimal Control Problem \eqref{eq:problem} with Full Box Constraints}
    \label{alg:withposition}
    \begin{algorithmic}[1]
        \REQUIRE $\vx_0$, $\vx_\f$, $\bar{\vx}$, $\underbar{\vx}$, $\bar{u}$, $\underbar{u}$. ($\bar{x}_3$, $\underbar{x}_3$ are allowed to be finite.)
        \ENSURE Optimal solution $\vx^*(t)$, $u^*(t)$, and $t_\f^*$.
        \STATE Apply Algorithm \ref{alg:noposition} and obtain the optimal solution $\hat{\vx}(t)$, $\hat{u}(t)$, and $\hat{t}_\f$ without position constraints.
        \IF {$\hat{x}_3(t)$ satisfies the position constraints}
            \RETURN $\vx^*(t)\leftarrow\hat\vx^*(t)$, $u^*(t)\leftarrow\hat u^*(t)$, $t_\f^*\leftarrow\hat{t}_\f^*$.
        \ENDIF
        \IF {The constraint $\hat{x}_3\geq\underbar{x}_3$ is exceeded}
            \STATE Connect $\vx_0$ or $\vx_\f$ with $(\underbar{3},2)$ based on Theorem \ref{thm:3order_full_box_constraints}, resulting in the tangent point $\vx_{(\underbar{3},2)}$.\label{line:connect_tangent_point}
            \STATE Connect $\vx_{(\underbar{3},2)}$ and $\vx_\f$ or $\vx_0$ based on Algorithm \ref{alg:withposition} recursively.
            \RETURN The connection of the above two profiles \textbf{if} the process succeeds; otherwise, \textbf{return} infeasibility. \label{line:connect_tangent_point_return}
        \ENDIF
        \IF {The constraint $\hat{x}_3\leq\underbar{x}_3$ is exceeded}
            \STATE Apply the process symmetrical to Lines \ref{line:connect_tangent_point}--\ref{line:connect_tangent_point_return} and \textbf{return} the result.
        \ENDIF
    \end{algorithmic}
\end{algorithm}

\begin{table*}[!t]
    \centering
    \caption{Quantitative Results. The values are presented as mean (standard deviation).}
    \label{tab:results}
    \begin{tabular}{c|cccc|ccc}
        \hline
        Case & \multicolumn{4}{c|}{Case 1 (Easy)} & \multicolumn{3}{c}{Case 2 (Difficult)} \\ \hline
        Method & Ours & Ruckig & CasADi & SCP & Ours & CasADi & SCP \\ \hline
        Success Rate $R_\text{s}$ & \textbf{100.0\%} & \textbf{100.0\%} & 77.7\% & 99.7\% & \textbf{100.0\%} & 97.7\% & 34.1\% \\ \hline
        Computational Time $T_\text{c}$ (s) & \begin{tabular}[c]{@{}c@{}}2.38$\times$10${}^{-6}$\\ (1.38$\times$10${}^{-6}$)\end{tabular} & \textbf{\begin{tabular}[c]{@{}c@{}}7.00$\times$10${}^{-7}$\\ (4.26$\times$10${}^{-7}$)\end{tabular}} & \begin{tabular}[c]{@{}c@{}}0.28\\ (0.21)\end{tabular} & \begin{tabular}[c]{@{}c@{}}7.28\\ (4.24)\end{tabular} & \textbf{\begin{tabular}[c]{@{}c@{}}1.25$\times$10${}^{-5}$\\ (4.24$\times$10${}^{-6}$)\end{tabular}} & \begin{tabular}[c]{@{}c@{}}4.76\\ (17.70)\end{tabular} & \begin{tabular}[c]{@{}c@{}}110.06\\ (43.40)\end{tabular} \\ \hline
        Normalized Terminal Time $\frac{t_\f}{t_{\f,\text{ours}}}$ & \textbf{\begin{tabular}[c]{@{}c@{}}1.00\\ (0.00)\end{tabular}} & \textbf{\begin{tabular}[c]{@{}c@{}}1.00\\ (0.00)\end{tabular}} & \begin{tabular}[c]{@{}c@{}}1.07\\ (1.01)\end{tabular} & \begin{tabular}[c]{@{}c@{}}1.03\\ (0.84)\end{tabular} & \textbf{\begin{tabular}[c]{@{}c@{}}1.00\\ (0.00)\end{tabular}} & \begin{tabular}[c]{@{}c@{}}1.03\\ (0.29)\end{tabular} & \textbf{\begin{tabular}[c]{@{}c@{}}1.00\\ (1.36$\times$10${}^{-3}$)\end{tabular}} \\ \hline

    \end{tabular}
\end{table*}

Two cases are designed in this section. Case 1 considers symmetric jerk constraints, i.e., $\bar{u}=-\underbar{u}$, without position constraints, i.e., $\bar{x}_3=+\infty$, $\underbar{x}_3=-\infty$. Case 2 involves asymmetric jerk constraints and position constraints. For each case, 1000 random instances are generated, where $\bar{u},-\underbar{u}\in[0.9,1.1]$, $\bar{x}_1,-\underbar{x}_1\in[0.7,1.3]$, $\bar{x}_2,-\underbar{x}_2\in[1.0,2.0]$. Boundary states $\vx_0$ and $\vx_\f$ are randomly generated in the feasible region. For Case 2, position constraints are active, i.e., tangent markers $(\underbar{3},2)$ or $(\bar{3},2)$ occur.

For Case 1, the results of Ruckig are close to ours, as shown in Table \ref{tab:results}. The computational times of Ruckig and our method are both shorter than 2.5$\mu\text{s}$, representing a reduction of at least 5 orders of magnitude compared to the other two baselines. For the example shown in Fig. \ref{fig:compare_trajectories}(a), both CasADi and SCP converged to local optima with an ASL $\underbar{0}\underbar{1}\bar{0}\bar{1}\underbar{0}$, which satisfy PMP but are not globally optimal. In contrast, our method plans the time-optimal trajectory with an ASL $\bar{0}\underbar{0}\bar{0}$, saving 90.9\% of terminal time. Among the four methods, our method and Ruckig achieve a 100\% success rate.

For Case 2, Ruckig fails due to the asymmetric jerk constraints and the position constraints. Due to the introduction of position constraints, both the computational efficiency and the success rate of the SCP method deteriorate significantly. CasADi achieves a higher success rate than that in Case 1 since the terminal time in Case 2 is shorter than that in Case 1. Our method achieves a 100\% success rate and saves 5-orders-of-magnitude computational time compared to CasADi and SCP. For the example shown in Fig. \ref{fig:compare_trajectories}(b), our method plans the time-optimal trajectory with an ASL $\bar{0}\bar{1}\underbar{0}(\underbar{3},2)\underbar{0}\bar{0}\underbar{0}$, while CasADi fails to converge to a BBS solution. The terminal time of our method is reduced by 73.4\% compared to CasADi in this example.

\begin{figure}[!t]
    \centering
    \includegraphics[width=\linewidth]{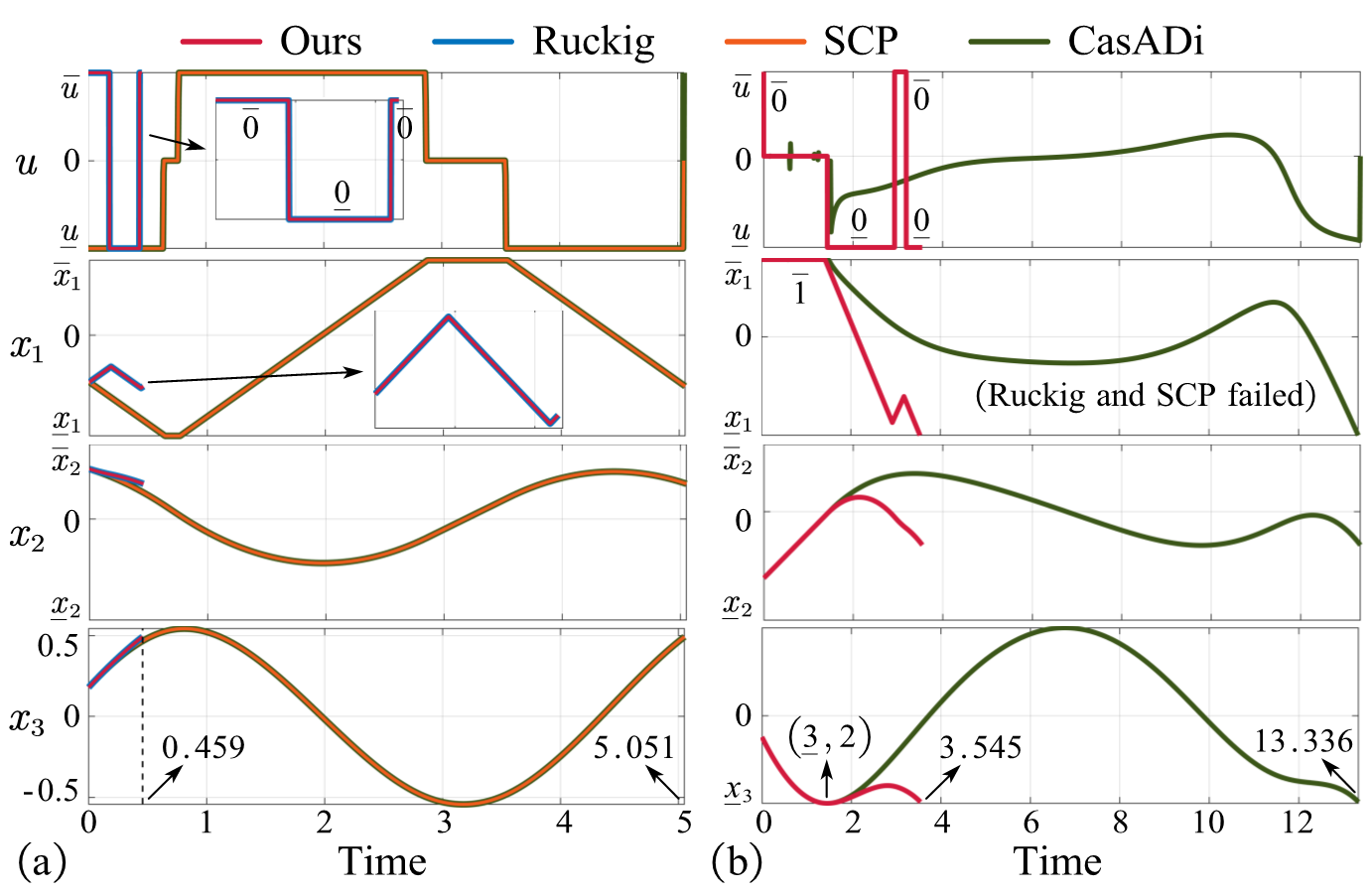}
    \caption{Comparison of trajectories planned by different methods. In (a), $\vx_0=(-0.582,0.797,0.333)$, $\vx_\f=(-0.634,0.561,0.647)$, $-1.035\leq u\leq1.035$, $(-1.246,-1.607,-\infty)\leq\vx\leq(0.925,1.191,+\infty)$. In (b), $\vx_0=(0.754,-1.101,-0.258)$, $\vx_\f=(-0.963,-0.553,-1.046)$, $-1.058\leq u\leq0.977$, $(-0.963,-1.800,-1.062)\leq\vx\leq(0.754,1.111,4.244)$.} 
    \label{fig:compare_trajectories}
\end{figure}

\section{Conclusion and Contribution}

This paper addressed the time-optimal control problem for triple integrator under full box constraints, which remains challenging with asymmetric constraints, non-stationary boundary states, and active position constraints. From a geometric perspective, this paper analytically characterized the time-optimal switching surfaces and divided the state space into several regions, where the optimal control can be represented by the same augmented switching law (ASL). The active condition of position constraints is also derived, resulting in a sequential feature of the ASL. An efficient algorithm is established to solve the problem. In the experiment, the proposed method solved all feasible problems with a 100\% success rate, whereas the baselines failed on some instances. The computational time, within approximately 10$\mu$s in our method, is reduced by at least 5 orders of magnitude compared to the baselines. The terminal time is reduced by more than 70\% in some cases since our method gets rid of the local minimum. To the best of the authors' knowledge, this is the first work to provide complete switching surfaces and an efficient algorithm for the 3rd-order problem, especially under non-stationary terminal conditions and active position constraints.

\bibliographystyle{myIEEEtran}
\bibliography{IEEEabrv,refs/ref}

% Generated by IEEEtran.bst, version: 1.14 (2015/08/26)
\begin{thebibliography}{10}
\providecommand{\url}[1]{#1}
\csname url@samestyle\endcsname
\providecommand{\newblock}{\relax}
\providecommand{\bibinfo}[2]{#2}
\providecommand{\BIBentrySTDinterwordspacing}{\spaceskip=0pt\relax}
\providecommand{\BIBentryALTinterwordstretchfactor}{4}
\providecommand{\BIBentryALTinterwordspacing}{\spaceskip=\fontdimen2\font plus
\BIBentryALTinterwordstretchfactor\fontdimen3\font minus \fontdimen4\font\relax}
\providecommand{\BIBforeignlanguage}[2]{{%
\expandafter\ifx\csname l@#1\endcsname\relax
\typeout{** WARNING: IEEEtran.bst: No hyphenation pattern has been}%
\typeout{** loaded for the language `#1'. Using the pattern for}%
\typeout{** the default language instead.}%
\else
\language=\csname l@#1\endcsname
\fi
#2}}
\providecommand{\BIBdecl}{\relax}
\BIBdecl

\bibitem{wang2025time}
Y.~Wang, C.~Hu, et~al., ``Time-optimal control for high-order chain-of-integrators systems with full state constraints and arbitrary terminal states,'' \emph{IEEE Transactions on Automatic Control}, vol.~70, no.~3, pp. 1499--1514, 2025.

\bibitem{berscheid2021jerk}
L.~Berscheid and T.~Kr{\"o}ger, ``Jerk-limited real-time trajectory generation with arbitrary target states,'' in \emph{Robotics: Science and Systems}, 2021.

\bibitem{kong2025efficient}
X.~Kong, S.~Chen, et~al., ``Efficient dynamics modeling of industrial robots in encoded monoid space,'' \emph{IEEE Transactions on Industrial Informatics}, 2025.

\bibitem{wang2026online}
Y.~Wang, C.~Hu, et~al., ``Online time-optimal trajectory planning along parametric toolpaths with strict constraint satisfaction and certifiable feasibility guarantee,'' \emph{International Journal of Machine Tools and Manufacture}, p. 104355, 2026.

\bibitem{wang2025consistency}
Y.~Wang, C.~Hu, et~al., ``On the consistency of path smoothing and trajectory planning in cnc machining: A surface-centric evaluation,'' \emph{Robotics and Computer-Integrated Manufacturing}, vol.~92, p. 102873, 2025.

\bibitem{he2020time}
S.~He, C.~Hu, et~al., ``Time optimal control of triple integrator with input saturation and full state constraints,'' \emph{Automatica}, vol. 122, p. 109240, 2020.

\bibitem{pontryagin1987mathematical}
L.~S. Pontryagin, \emph{The mathematical theory of optimal processes}.\hskip 1em plus 0.5em minus 0.4em\relax Routledge, 1987.

\bibitem{du2015complete}
X.~Du, J.~Huang, et~al., ``A complete s-shape feed rate scheduling approach for nurbs interpolator,'' \emph{Journal of Computational Design and Engineering}, vol.~2, no.~4, pp. 206--217, 2015.

\bibitem{bartolini2002time}
G.~Bartolini, S.~Pillosu, et~al., ``Time-optimal stabilization for a third-order integrator: a robust state-feedback implementation,'' in \emph{Dynamics, Bifurcations, and Control}.\hskip 1em plus 0.5em minus 0.4em\relax Springer, 2002, pp. 131--144.

\bibitem{patil2015computation}
D.~Patil, A.~Mulla, et~al., ``Computation of feedback control for time optimal state transfer using {G}r{\"o}ebner basis,'' \emph{Systems \& Control Letters}, vol.~79, pp. 1--7, 2015.

\bibitem{kroger2011opening}
T.~Kr{\"o}ger, ``Opening the door to new sensor-based robot applications--the reflexxes motion libraries,'' in \emph{IEEE International Conference on Robotics and Automation}.\hskip 1em plus 0.5em minus 0.4em\relax IEEE, 2011, pp. 1--4.

\bibitem{MIMGithub2025}
Y.~Wang, ``Manifoldinterceptmethod,'' \url{https://github.com/WangY18/ManifoldInterceptMethod.git}, 2025.

\bibitem{maurer1977optimal}
H.~Maurer, ``On optimal control problems with bounded state variables and control appearing linearly,'' \emph{SIAM Journal on Control and Optimization}, vol.~15, no.~3, pp. 345--362, 1977.

\bibitem{yury2016quasi}
M.~Yury, ``Quasi-time-optimal control of third-order integrators with phase constraints,'' in \emph{International Conference Stability and Oscillations of Nonlinear Control Systems}.\hskip 1em plus 0.5em minus 0.4em\relax IEEE, 2016, pp. 1--4.

\bibitem{wang2025chattering}
Y.~Wang, C.~Hu, et~al., ``Chattering phenomena in time-optimal control for high-order chain-of-integrator systems with full state constraints,'' \emph{IEEE Transactions on Automatic Control}, vol.~70, no.~8, pp. 5365--5380, 2025.

\bibitem{Ruckig2025Online}
T.~Kr{\"o}ger, ``Ruckig - motion generation for robots and machines,'' \url{https://ruckig.com/}, 2025.

\bibitem{Macaulay2}
D.~R. Grayson and M.~E. Stillman, ``Macaulay2,'' \url{http://www2.macaulay2.com}, 2025.

\bibitem{andersson2019casadi}
J.~A. Andersson, J.~Gillis, et~al., ``Casadi: a software framework for nonlinear optimization and optimal control,'' \emph{Mathematical Programming Computation}, vol.~11, pp. 1--36, 2019.

\bibitem{wachter2006implementation}
A.~W{\"a}chter and L.~T. Biegler, ``On the implementation of an interior-point filter line-search algorithm for large-scale nonlinear programming,'' \emph{Mathematical Programming}, vol. 106, pp. 25--57, 2006.

\bibitem{leomanni2022time}
M.~Leomanni, G.~Costante, et~al., ``Time-optimal control of a multidimensional integrator chain with applications,'' \emph{IEEE Control Systems Letters}, vol.~6, pp. 2371--2376, 2022.

\bibitem{gurobi}
Gurobi, ``Gurobi optimizer reference manual,'' \url{https://www.gurobi.com}, 2025.

\end{thebibliography}

\end{document}